\newtheorem{Prop}{Proposition}[section]
\newtheorem{Thm}[Prop]{Theorem}
\newtheorem{Lem}[Prop]{Lemma}
\newtheorem{Cor}[Prop]{Corollary}
\newtheorem*{Thm0}{Theorem}
\newtheorem*{Cor0}{Corollary}
\theoremstyle{definition}
\newtheorem{Ex}[Prop]{Example}
\newtheorem{Def}[Prop]{Definition}
\begin{document}

\title{A NOTE ON THE CONCORDANCE INVARIANTS UPSILON AND PHI}

\author{Shida Wang}

\email{wang217@indiana.edu}

\begin{abstract}

Dai, Hom, Stoffregen and Truong defined a family of concordance invariants $\varphi_j$.
The example of a knot with zero Upsilon invariant but nonzero epsilon invariant previously given by Hom also has nonzero phi invariant.
We show there are infinitely many such knots that are linearly independent in the smooth concordance group.
In the opposite direction, we build infinite families of linearly independent knots with zero phi invariant but nonzero Upsilon invariant.
We also give a recursive formula for the phi invariant of torus knots.

\end{abstract}

\maketitle

\section{Introduction}

For any knot $K$, there are two recently defined invariants derived from knot Heegaard Floer theory:
the Upsilon invariant, defined by Ozsv\'{a}th-Stipsicz-Szab\'{o} in~\cite{upsilon}, is a piecewise linear function on~$[0,2]$ and denoted by~$\Upsilon_K(t)$;
the phi invariant, defined by Dai-Hom-Stoffregen-Truong in~\cite{phi}, is a sequence of integers and denoted by~$(\varphi_j(K))_{j=1}^\infty$.
These two invariants give homomorphisms from the smooth concordance group~$\mathcal{C}$
to the abelian groups of continuous functions on $[0,2]$ and of sequences of integers (both with pointwise addition as the group operation), respectively.
The invariants have shown their power by proving the following result about the subgroup~$\mathcal{C}_{TS}$ of~$\mathcal{C}$ consisting of topologically slice knots.

\begin{Thm0}\emph{(\cite[Theorem 1.20]{upsilon} and \cite[Theorem 1.12]{phi})} The group $\mathcal{C}_{TS}$ contains a direct summand isomorphic to~$\mathbb{Z}^\infty$.\end{Thm0}

Here~$\mathbb{Z}^\infty$ denotes the additive abelian group of infinite sequences of integers with at most finitely many nonzero terms.
One may wonder whether one of the two invariants is stronger than the other. In fact, Dai, Hom, Stoffregen and Truong show that the~$\varphi$-invariant is not weaker than the~$\Upsilon$-invariant.

\begin{Thm0}\emph{(\cite[Proposition 1.10]{phi})} There exists a knot with vanishing~$\Upsilon$-invariant but nonvanishing~$\varphi$-invariant.\end{Thm0}

The example in the above theorem is exactly the one with vanishing~$\Upsilon$-invariant but\linebreak nonvanishing~$\varepsilon$-invariant given by Hom in~\cite{example0},
where~$\varepsilon$ is another smooth concordance invariant defined earlier~\cite{epsilon}.
Recently the author gives infinite families of linearly independent knots with vanishing~$\Upsilon$-invariant but nonvanishing~$\varepsilon$-invariant~\cite{example1}.
In this note, we will partially compute the~$\varphi$-invariant of knots in these families and verify the following fact, which recovers the main result of~\cite{example1}.

\begin{Prop}\label{trivial}There exists a direct summand of $\mathcal{C}$ isomorphic to~$\mathbb{Z}^\infty$ such that each of its nonzero elements has vanishing~$\Upsilon$-invariant but nonvanishing $\varphi$-invariant.\end{Prop}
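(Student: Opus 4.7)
The plan is to reuse the infinite family $\{K_n\}_{n \ge 1}$ constructed by the author in~\cite{example1}, already known there to be linearly independent in $\mathcal{C}$ with vanishing $\Upsilon$-invariant and nonvanishing $\varepsilon$-invariant, and to detect the same family via the $\varphi$-invariant. Since each $\varphi_j\colon\mathcal{C}\to\mathbb{Z}$ is a homomorphism, it suffices to exhibit indices $j_1<j_2<\cdots$ for which the matrix $\bigl(\varphi_{j_m}(K_n)\bigr)_{m,n\ge 1}$ is triangular with diagonal entries $\pm 1$: a standard change-of-basis argument then identifies the subgroup $H=\langle K_n\rangle\le\mathcal{C}$ with a direct summand of $\mathcal{C}$ isomorphic to~$\mathbb{Z}^\infty$, via a retraction assembled from the $\varphi_{j_m}$.

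The substantive task is therefore a $\varphi$-computation on each $K_n$. I would first recall the explicit construction in~\cite{example1}, where the $K_n$ are built from a short list of building blocks whose knot Floer complexes are well understood, and then use connected-sum additivity of each $\varphi_j$ (from~\cite{phi}) to reduce to those pieces. By definition~\cite{phi}, evaluating $\varphi_j$ on a knot requires first passing to a standard representative of the local equivalence class of its $CFK^\infty$; producing such a standard form for the complexes of the $K_n$, which arise by cabling and iterated operations and so are not presented in standard form, is where the main technical difficulty lies and will occupy the bulk of the later sections. The choice of $j_n$ should be calibrated to the scale implicit in the construction of $K_n$, so that $\varphi_{j_n}$ detects the contribution of $K_n$ but vanishes on the family members indexed beyond it.

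Once the triangular unit-diagonal pattern is established, everything else is formal. The subgroup $H$ is free abelian of countably infinite rank, embeds as a direct summand of~$\mathcal{C}$ as above, and vanishing of $\Upsilon$ on every element of $H$ is inherited from~\cite{example1} by additivity of $\Upsilon$. For any nonzero $K=\sum_n c_n K_n\in H$, triangularity supplies an extremal index $n_0$ with $c_{n_0}\ne 0$ for which $\varphi_{j_{n_0}}(K)=\pm c_{n_0}\ne 0$, so $\varphi(K)$ is nonzero, completing the required properties.
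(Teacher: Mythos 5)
Your high-level plan --- reuse the family $\{K_n\}$ from~\cite{example1}, exhibit indices $j_n$ so that $(\varphi_{j_m}(K_n))$ is triangular with unit diagonal, and then invoke the standard summand lemma (\cite[Lemma 6.4]{upsilon}) --- is exactly the route the paper takes, with detecting index $j_n=p_n-2$. The gap is in your description of the technical content. You assert that the $K_n$ ``arise by cabling and iterated operations,'' that their complexes are ``not presented in standard form,'' and that producing a standard representative of the local equivalence class is ``where the main technical difficulty lies.'' None of this is accurate for the family in~\cite{example1}: those knots are connected sums of torus knots and their inverses, of the form $K_n = T_{p_n,k_n p_n+r_n}\#-T_{r_n,p_n}\#-k_nT_{p_n,p_n+1}$ (one takes $k_n=1$), with no cabling anywhere.

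This matters because it changes the nature of the computation entirely. Torus knots are $L$-space knots, whose $\mathit{CFK}^\infty$ is already a staircase complex --- i.e.\ already a standard complex in the sense of~\cite{phi} --- so no normalization step is required. For an $L$-space knot with formal semigroup $S$, one has $\varphi_j=\Phi_j(S)$, the number of $j$-gaps of $S$ (Lemma~\ref{phiPhi}), and by additivity of each $\varphi_j$ under connected sum the whole verification reduces to counting $(p-2)$-gaps in $\langle p,q\rangle$ for a few torus knots. That is an elementary exercise with numerical semigroups (Proposition~\ref{rough} and Corollary~\ref{family:phi}), yielding $\varphi_{p_n-2}(K_n)=-k_n$ and $\varphi_{p_m-2}(K_n)=0$ for $m>n$. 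So the ``main technical difficulty'' you anticipate does not exist, and your proof as written would spend its effort solving a much harder problem (standard forms for cable complexes) that the actual family never poses. Replace the cabling discussion with the semigroup gap-count and your argument matches the paper's.
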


A very simple alternative proof of the main theorem and a conjecture in~\cite{secondary1} will also be provided in the course of the above computation of the~$\varphi$-invariant. (The conjecture was first proved by Xu~\cite{secondary2}.)

On the contrary, \cite[Proposition 1.10]{phi} gives a knot Floer complex, a knot with which would have vanishing~$\varphi$-invariant but nonvanishing~$\Upsilon$-invariant.
It is not known yet whether this complex can be realized by a knot,
but there are many other knots that also have vanishing~$\varphi$-invariant and nonvanishing~$\Upsilon$-invariant.
The reason is that the~$\varphi$-invariant of any knot is a linear combination of that of torus knots~\cite[Example~1.7]{phi} while this is not true for the~$\Upsilon$-invariant~\cite{semigroup}.
With this observation, we actually can prove a stronger statement.

\begin{Prop}\label{easy}There exists a direct summand of $\mathcal{C}$ isomorphic to $\mathbb{Z}^\infty$ such that each of its nonzero elements has vanishing~$\varphi$-invariant but nonvanishing $\Upsilon$-invariant.\end{Prop}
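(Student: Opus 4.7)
The plan is to exploit the asymmetry noted in the paragraph preceding the statement. Since the $\varphi$-invariant of every knot is a $\mathbb{Z}$-linear combination of $\varphi$-invariants of torus knots (Example~1.7 of \cite{phi}), while the $\Upsilon$-invariant is not (cf.~\cite{semigroup}), one can always perturb a given knot by a finite combination of torus knots so that $\varphi$ becomes zero while the $\Upsilon$-invariant remains nonzero.

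First I would fix an infinite sequence $\{K_n\}_{n\ge 1}$ of knots---for instance, drawn from the algebraic-knot families of \cite{semigroup}---whose $\Upsilon$-functions exhibit corners (slope jumps) at prescribed parameters $t_n\in(0,2)$ that are pairwise distinct and chosen to avoid the (countable) union of singular sets of every torus knot that will appear below. For each $n$, Example~1.7 of \cite{phi} furnishes a finite collection of torus knots $T(p_{n,i},q_{n,i})$ and integers $m_{n,i}$ with
\[
\varphi(K_n)=\sum_i m_{n,i}\,\varphi\bigl(T(p_{n,i},q_{n,i})\bigr).
\]
Let $K_n'$ denote the connected sum of $K_n$ with $-m_{n,i}$ copies of $T(p_{n,i},q_{n,i})$ for each $i$ (with negative coefficients interpreted as mirrors). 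Then $\varphi(K_n')=0$ by additivity of $\varphi$, whereas
\[
\Upsilon_{K_n'}=\Upsilon_{K_n}-\sum_i m_{n,i}\,\Upsilon_{T(p_{n,i},q_{n,i})},
\]
which by construction still has a nontrivial corner at $t=t_n$, since each torus knot $\Upsilon$-function appearing in the correction is smooth at $t_n$.

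Next I would establish linear independence and the direct-summand property in one stroke via a slope-jump argument modelled on \cite{upsilon}. Let $\Delta_t\colon\mathcal{C}\to\mathbb{Z}$ be the homomorphism sending $K$ to the slope jump of $\Upsilon_K$ at $t$. After a harmless rescaling (e.g.\ replacing $K_n'$ by a suitable multiple) one may assume $\Delta_{t_n}(K_n')=1$, while $\Delta_{t_n}(K_m')=0$ for every $m\ne n$ thanks to the pairwise separation arranged above. The assignment $\Phi(K):=(\Delta_{t_n}(K))_{n\ge 1}$ then defines a homomorphism $\mathcal{C}\to\mathbb{Z}^{\infty}$ sending $K_n'$ to the $n$-th basis vector; since every $\Upsilon_K$ has only finitely many corners in $(0,2)$, the tuple $\Phi(K)$ lies in $\mathbb{Z}^{\infty}$. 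Hence $\Phi$ splits the inclusion $\langle K_n'\rangle\cong\mathbb{Z}^{\infty}\hookrightarrow\mathcal{C}$, giving the required direct summand.

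The main obstacle is the explicit selection of the data $\{K_n,t_n\}$ realizing the two requirements simultaneously: (i) that $\Upsilon_{K_n}$ really has a slope jump at $t_n$ not cancellable by any integer combination of torus knot $\Upsilon$'s, and (ii) that the points $t_n$ are mutually separated and disjoint from the corner sets of all torus knots appearing in the $\varphi$-correction for every~$m$. Condition~(i) is the substantive input, supplied by the semigroup obstructions of \cite{semigroup}; condition~(ii) should be a bookkeeping matter, handled inductively by letting $t_n$ tend to $0$ and choosing realizing knots $K_n$ and torus-knot corrections so that the accumulated set of forbidden corners never contains the next $t_n$.
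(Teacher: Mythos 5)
Your high-level strategy matches the paper's: take knots whose $\Upsilon$ does something interesting, use Lemma~\ref{basis} to find a connected sum of torus knots matching their $\varphi$-invariant, and subtract. The paper likewise starts from the cables $J_k=(T_{2,3})_{k,2k-1}$ of \cite{semigroup} and forms $J_k\#-L_k$ with $L_k$ a connected sum of knots in $\{T_{p,p+1}\}$.

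However, the part you describe as ``bookkeeping'' is actually where the real content lies, and your version of it has a genuine gap. You propose to choose $t_n$ to avoid the corner sets of the torus knots appearing in the $\varphi$-correction. But the corrections produced by Lemma~\ref{basis} are precisely the knots $T_{p,p+1}$ with $p$ up to $N(K_n)+1$, and their corner sets $\{\tfrac{2m}{p}:1\le m<p\}$ tend to land exactly on the interesting singularities of the starting knots. For instance, $J_k$ has its relevant singularity at $\tfrac{2}{2k-1}$, which is a corner of $T_{2k-1,2k}$, so the naive ``avoid the corners'' strategy fails outright for the most natural families. A separate gap concerns integrality: $\Delta\Upsilon'_K(t)$ is rational but not integral, and ``replacing $K_n'$ by a suitable multiple'' rescales the value without making the homomorphism $\Delta_{t_n}\colon\mathcal{C}\to\mathbb{R}$ land in $\mathbb{Z}$, which is what the direct-summand conclusion (via \cite[Lemma~6.4]{upsilon}) requires.

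The paper resolves both issues at once by not trying to avoid corners at all. It uses the homomorphisms
$\lambda_j\colon K\mapsto\tfrac{1}{2j-1}\Delta\Upsilon'_K(\tfrac{2}{2j-1})-\tfrac{1}{2j-1}\Delta\Upsilon'_K(\tfrac{4}{2j-1})$.
The normalizing factor makes $\lambda_j$ integer-valued by \cite[Corollary~8.2]{denominator}, and by \cite[Proposition~3.4]{semigroup} each $\lambda_j$ vanishes on every $T_{p,p+1}$ (essentially because the slope jumps of $\Upsilon_{T_{p,p+1}}$ are equal along its arithmetic progression of corners, so the difference cancels), hence on every $L_k$. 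So there is no corner-avoidance condition to arrange: $\lambda_j(J_k\#-L_k)=\lambda_j(J_k)$ automatically, and \cite[Theorem~3.8]{semigroup} supplies the upper-triangular values $\lambda_k(J_k)=1$, $\lambda_j(J_k)=0$ for $j>k$, which is what \cite[Lemma~6.4]{upsilon} needs. You would need to either adopt this kind of torus-knot--killing homomorphism or give a concrete inductive construction that really does separate all the corners and restore integrality; as written, the inductive ``let $t_n\to 0$'' sketch also controls only the already-chosen $K_m'$ with $m<n$, not the future ones.
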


We will also give a second proof of this proposition, using different knots.
The proof is based on a formula for the~$\varphi$-invariant of torus knots.
Recall that Feller and Krcatovich proved the following recursive formula.

\begin{Thm}\label{expansion}\emph{(\cite[Proposition 2.2]{recursive})} Suppose $p$ and $r$ are relatively prime positive integers and $k$ is a nonnegative integer.
Then $\Upsilon_{T_{p,kp+r}}(t)=\Upsilon_{T_{r,p}}(t)+k\Upsilon_{T_{p,p+1}}(t)$.\end{Thm}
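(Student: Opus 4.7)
The plan is to use the fact that torus knots are L-space knots, so $\Upsilon_{T_{p,q}}(t)$ is entirely determined by the Alexander polynomial, or equivalently by the numerical semigroup $S_{p,q}=\langle p,q\rangle$ and its gap set $G(p,q)=\mathbb{Z}_{\geq 0}\setminus S_{p,q}$. Accordingly, I would begin by recalling an explicit combinatorial formula for $\Upsilon$ of an L-space knot: its knot Floer complex is a staircase whose turning points are encoded by $\Delta_{T_{p,q}}(T)$, and $\Upsilon(t)$ is realized as the extremum over these turning points of a linear functional with $t$-dependent coefficients. This presents $\Upsilon_{T_{p,q}}(t)$ as a piecewise linear function whose vertices are determined by the sequence of gaps of $S_{p,q}$.

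The heart of the argument is a combinatorial identity for gap sets. Writing $q=kp+r$ with $0<r<p$, every nonnegative integer has a canonical representation $ap+bq$ with $0\leq b<p$, and membership in $S_{p,q}$ becomes the condition $a\geq bk$. Partitioning $G(p,kp+r)$ according to the value of $b$ and comparing with the analogous partitions of $G(r,p)$ and $G(p,p+1)$, I expect a shell-wise identification: one shell reproduces the gap structure of $T_{r,p}$, while the remaining $k$ shells are translates of one another and each match the gap structure of $T_{p,p+1}$. Crucially, the translation between shells corresponds to a uniform shift in Alexander grading that is compatible with the linear functional computing $\Upsilon$.

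Combining these, $\Upsilon_{T_{p,kp+r}}(t)$ should decompose as a sum over shells, with the $k$ translated shells collapsing to $k\Upsilon_{T_{p,p+1}}(t)$ and the remaining one producing $\Upsilon_{T_{r,p}}(t)$. The main obstacle is that $\Upsilon$ is defined as an extremum, which does not naively distribute over a decomposition into pieces. To overcome this I would show that the extremizing turning point of the staircase of $T_{p,kp+r}$ splits as a sum of extremizing turning points drawn independently from each shell, uniformly in $t\in[0,2]$; this should follow from a monotonicity argument ensuring that each shell contributes its own linear term without interaction. As a consistency check, specializing to $t=1$ reduces the identity to $\tau(T_{p,kp+r})=\tau(T_{r,p})+k\tau(T_{p,p+1})$, which holds on the nose via the formula $\tau(T_{p,q})=(p-1)(q-1)/2$.
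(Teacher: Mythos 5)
This theorem is not proved in the paper; it is quoted from Feller--Krcatovich~\cite{recursive}, so there is no ``paper's own proof'' to compare against. I will therefore assess your sketch on its own merits against what such a proof must accomplish.

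Your starting point is correct and matches the standard approach: torus knots are $L$-space knots, so $\Upsilon_{T_{p,q}}$ is determined by the staircase complex, equivalently by the semigroup $\langle p,q\rangle$, and one should look for a combinatorial relation among the semigroups $\langle p,kp+r\rangle$, $\langle r,p\rangle$ and $\langle p,p+1\rangle$. However, there are two genuine gaps and one error in the sketch.

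First, the ``shell'' decomposition is not actually established, and as phrased it does not hold literally. Writing each nonnegative integer as $n=ap+br$ with $0\leqslant b<p$, membership in $\langle p,kp+r\rangle$ is indeed $a\geqslant bk$ and membership in $\langle r,p\rangle$ is $a\geqslant 0$, and the cardinalities check out: the extra gaps number $\sum_{b=0}^{p-1} bk = k\cdot\tfrac{(p-1)p}{2}=k\,\lvert G(p,p+1)\rvert$. But the $k$ ``extra shells'' $\{ap+br:\, jb\leqslant a<(j+1)b\}$ for $j=0,\dots,k-1$ are not translates of $G(p,p+1)=\{cp+d:\,0\leqslant d<p,\ 0\leqslant c<d\}$, nor of each other, as subsets of $\mathbb{Z}$; you would need to say precisely what invariant of the shells you intend to match up, and why that invariant computes $\Upsilon$.

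Second, and more seriously, the step ``the extremizing turning point splits as a sum of extremizing turning points from each shell'' is exactly the entire content of the theorem, and ``this should follow from a monotonicity argument'' is not an argument. $\Upsilon_K(t)$ is a $\max$ (or $\min$) over staircase turning points of an affine-in-$t$ functional; such an envelope does not distribute over a decomposition of the index set unless one exhibits the compatibility concretely (e.g. by showing that, at every $t$, the optimal turning point for the big staircase is obtained by concatenating optimal turning points for the pieces, which requires knowing the arrangement of the steps, not merely the count of gaps). Without this, the proof has no engine. Note also that for the present paper's analog (Theorem \ref{main} for $\varphi$), the decomposition $\Phi(U)=\sum_i\Phi(A_i(U,p))$ works precisely because $\Phi_j$ is an additive count of $j$-gaps; $\Upsilon$ is not additive over blocks in that sense, which is why the two proofs cannot be parallel without further work.

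Finally, the consistency check at $t=1$ is wrong: $\Upsilon_K(1)\neq-\tau(K)$ in general. For instance $\Upsilon_{T_{3,4}}(1)=-2$ while $\tau(T_{3,4})=3$. What is true is that $\Upsilon_K(t)=-\tau(K)\,t$ for $t$ near $0$, so the correct sanity check is to differentiate at $t=0^+$, which reduces the claimed identity to $\tau(T_{p,kp+r})=\tau(T_{r,p})+k\,\tau(T_{p,p+1})$, and this follows from $\tau(T_{p,q})=\tfrac{(p-1)(q-1)}{2}$.
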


From this formula, one immediately knows that any knot of the form $T_{p,kp+r}\#-T_{r,p}\#-kT_{p,p+1}$ has vanishing~$\Upsilon$-invariant,
where~$-K$ means the mirror image of the knot~$K$ with reversed orientation (representing the inverse element of~$K$ in~$\mathcal{C}$) and~$kK$ means the connected sum of~$k$ copies of~$K$.
In light of the fact that there are many known examples of knots with vanishing~$\Upsilon$-invariant, Proposition~\ref{trivial} is quite natural.

As a counterpart, we will provide a recursive formula for the $\varphi$-invariant of torus knots.

\begin{Thm}\label{main}Suppose $p$ and $r$ are relatively prime positive integers with~$r<p$ and~$k$ is a nonnegative integer.
Then $\varphi(T_{p,kp+r})=(k+1)\varphi(T_{r,p})+k\varphi(T_{p-r,p})+k(\varphi(T_{p,p+1})-\varphi(T_{p-1,p}))$.\end{Thm}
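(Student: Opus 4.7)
My plan is to work at the level of the standard complex, exploiting that positive torus knots are L-space knots. For such a knot $T_{p,q}$, the knot Floer complex takes the form of a staircase whose step lengths are determined by the numerical semigroup $\langle p, q\rangle$, and \cite{phi} supplies a combinatorial recipe that reads $\varphi_j$ off from this staircase.

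The first step is to lift Feller and Krcatovich's argument behind Theorem~\ref{expansion} to the level of the staircase complex itself: the staircase of $T_{p,kp+r}$ should decompose as an interleaving of $k+1$ ``blocks'' whose step data agrees with the staircase of $T_{r,p}$ and $k$ ``blocks'' whose step data agrees with the staircase of $T_{p,p+1}$. For Upsilon, the identity of Theorem~\ref{expansion} then follows by linearity; the $k+1$ copies of $T_{r,p}$ collapse to a single $\Upsilon_{T_{r,p}}(t)$ because $\Upsilon$ depends only on certain cumulative sums of step data.

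The second step is to apply the $\varphi_j$ recipe to this assembled staircase. Unlike Upsilon, $\varphi_j$ depends on the global ordering of step lengths rather than just their sums, so the blocks do not contribute independently. The target identity rearranges to
\[
\varphi(T_{p,kp+r}) - \varphi(T_{r,p}) - k\varphi(T_{p,p+1}) = k\bigl(\varphi(T_{r,p}) + \varphi(T_{p-r,p}) - \varphi(T_{p-1,p})\bigr),
\]
so the correction to the naive additive guess should equal $k$ times a fixed quantity. The natural interpretation is that each of the $k$ interfaces between adjacent blocks of different type produces the same fixed correction. The appearance of the ``dual'' torus knots $T_{p-r,p}$ and $T_{p-1,p}$, whose staircases are related to those of $T_{r,p}$ and $T_{p,p+1}$ by a swap of horizontal and vertical step lengths, is consistent with the asymmetry of the $\varphi_j$ recipe between the two axes of the staircase.

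The main obstacle will be the second step: carrying out a careful case analysis, organized by the position of $j$ within the sorted list of step lengths of the assembled staircase, to verify that each interface correction equals $\varphi(T_{r,p}) + \varphi(T_{p-r,p}) - \varphi(T_{p-1,p})$. Since $\varphi_j$ depends piecewise on these orderings, this splits into several subcases, but the repetitive block structure of the staircase should make each case combinatorially tractable, and one should be able to organize the verification by induction on $k$ so that only the newly inserted $T_{p,p+1}$-block and its interfaces need fresh analysis at each step.
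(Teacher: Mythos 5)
Your proposal takes a genuinely different route from the paper, but it hinges on a decomposition claim in Step~1 that is false, and the subsequent steps inherit the problem. You assert that the staircase of $T_{p,kp+r}$ decomposes as an interleaving of $k+1$ blocks whose step data agrees with that of $T_{r,p}$ and $k$ blocks whose step data agrees with that of $T_{p,p+1}$. This fails already in the smallest case: for $p=5$, $r=2$, $k=1$ the staircase of $T_{5,7}$ is $\mathrm{St}(1,4,1,1,1,2,1,1,1,1,2,1,1,1,4,1)$, which contains no step of length~$3$, whereas the staircase of $T_{5,6}$ is $\mathrm{St}(1,4,2,3,3,2,4,1)$, so no interleaving can embed even a single $T_{5,6}$-block. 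A structural warning sign is the coefficient mismatch between the two formulas: Theorem~\ref{expansion} has the single term $\Upsilon_{T_{r,p}}$ while the target identity has $(k+1)\varphi(T_{r,p})$, so one fixed block decomposition cannot yield both by plain block-wise additivity, and the ``collapse'' you invoke for $\Upsilon$ is left unexplained. The interface-correction heuristic in Step~2 then has no foundation; it also misplaces where the extra term comes from --- in the actual argument the $k\bigl(\varphi(T_{p,p+1})-\varphi(T_{p-1,p})\bigr)$ contribution arises not from $k$ interfaces but from the single prefix $\{0,p,2p,\dots,kp\}$ of the semigroup, which contributes exactly $k$ gaps of length $p-1$ and nothing else.

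The paper decomposes along a different axis: not sequentially along the staircase but by residue class modulo~$p$. Writing $S=\langle p,kp+r\rangle$, it slices $S$ into Ap\'{e}ry-set windows $A_i(S,p)$ between consecutive Ap\'{e}ry elements, proves (Lemma~\ref{copies}) that each window is a repeated copy of a fixed pattern $\{\kappa_0,\dots,\kappa_{i-1},p\}$ with multiplicity a floor difference, and then reduces the whole identity to an elementary floor-function identity relating the multiplicities for $\langle p,kp+r\rangle$, $\langle p,r\rangle$, and $\langle p,p-r\rangle$. The reason $T_{p-r,p}$ enters is the reflection $\kappa_l\mapsto p-\kappa_l$ identifying the $\langle p,r\rangle$- and $\langle p,p-r\rangle$-patterns up to $\Phi$; your ``duality'' instinct points at this, but it is a reflection inside a residue window, not a global swap of the staircase axes. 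If you want to carry out a proof in the spirit of your outline, replacing the interleaving claim with this residue-class decomposition is the step that makes everything else go through.
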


The application of this result includes producing more families satisfying Proposition~\ref{easy} and the following consequence.

\begin{Cor0}There exist knots with vanishing~$\varphi$-invariant but arbitrarily large splitting\linebreak concordance genus.\end{Cor0}


\emph{Acknowledgments.} The author wishes to express sincere thanks to Professor Robert Lipshitz for carefully reading a draft of this paper
and detailed suggestions on grammar.
Thanks also to Professors Charles Livingston and Jennifer Hom for comments.

\section{Notations and Conventions}

\subsection{Preliminaries}

We assume the reader is familiar with knot Floer homology, defined by Ozsv\'{a}th-Szab\'{o}~\cite{CFKdef1} and independently Rasmussen~\cite{CFKdef2},
the~$\varepsilon$-invariant, defined by Hom~\cite{epsilon},
the~$\Upsilon$-invariant, defined by Ozsv\'{a}th-Stipsicz-Szab\'{o}~\cite{upsilon},
and the~$\varphi$-invariant, defined by Dai-Hom-Stoffregen-Truong in~\cite{phi}.
We briefly recall some properties for later use.

To a knot $K\subset S^3$, the knot Floer complex associates a doubly filtered, free, finitely generated chain complex over~$\mathbb{F}[U,U^{-1}]$, denoted by~$\mathit{CFK}^\infty(K)$,
where~$\mathbb{F}$ is the field with two elements.
Up to filtered chain homotopy equivalence, this complex is an invariant of~$K$.
The connected sum operation corresponds to the tensor product operation, and the negative of a knot\linebreak corresponds to the dual of a complex.
There is a class $\mathfrak{C}$ of doubly filtered, free, finitely generated chain complexes over~$\mathbb{F}[U,U^{-1}]$ satisfying certain homological conditions (see~\cite[Definition~2.2]{1nn1} for details).
The class $\mathfrak{C}$ includes~$\mathit{CFK}^\infty(K)$ for all $K$, and there are complexes in~$\mathfrak{C}$ that cannot be realized by knots.
For any complex $C$ in~$\mathfrak{C}$, the number~$\varepsilon(C)$ in~$\{-1,0,1\}$ is an invariant of $C$ up to filtered chain homotopy~\cite{epsilon}.

An equivalence relation $\sim$ on~$\mathfrak{C}$ respecting the tensor product and dual operations, called \emph{stable equivalence}, is defined in~\cite{survey},
which satisfies that~$\mathit{CFK}^\infty(K)\sim\mathit{CFK}^\infty(K')$ for any smoothly concordant knots~$K$ and~$K'$.
Another equivalence relation~$\sim_\varepsilon$ that also satisfies this property is defined by~$C\sim_\varepsilon K'\Leftrightarrow\varepsilon(C\otimes C'^*)=0$,
where~$C'^*$ denotes the dual of~$C'$.
The relation~$\sim_\varepsilon$ is called \emph{$\varepsilon$-equivalence}.
It is coarser than~$\sim$, that is,~$C\sim C'\Rightarrow C\sim_\varepsilon C'$ for any~$C,C'\in\mathfrak{C}$.
Hence~$\mathfrak{C}/\sim$ and~$\mathfrak{C}/\sim_\varepsilon$ are both abelian groups and there is an obvious quotient homomorphism~$\mathfrak{C}/\sim\rightarrow\mathfrak{C}/\sim_\varepsilon$.

For any~$C\in\mathfrak{C}$, the~$\varepsilon$-equivalence class of~$C$ is denoted by~$[C]$.
For any knot~$K$, the~$\varepsilon$-equivalence class of~$K$ means~$[\mathit{CFK}^\infty(K)]$ and is sometimes denoted by~$[\![K]\!]$.
The group~$\mathfrak{C}/\sim_\varepsilon$ is denoted by~$\mathcal{CFK}_\mathrm{alg}$ in~\cite{1nn1}.
It has a subgroup~$\mathcal{CFK}:=\{[\![K]\!]\mid K\text{ is a knot}\}$, which is a quotient group of the smooth concordance group~$\mathcal{C}$.

For any complex~$C$ in~$\mathfrak{C}$, the invariants~$\Upsilon(C)$ and~$\varphi(C)=(\varphi_j(C))_{j=1}^\infty$
are a piecewise linear function on $[0,2]$ and an infinite sequence of integers with at most finitely many nonzero terms, respectively.
They are both invariant under stable equivalence and give homomorphisms from~$\mathfrak{C}/\sim$.
The invariant~$\varphi$ is further preserved by~$\sim_\varepsilon$ and hence factors through~$\mathfrak{C}/\sim_\varepsilon$, but this is not true for~$\Upsilon$.

For any knot~$K$, the invariants~$\varepsilon(\mathit{CFK}^\infty(K))$,~$\Upsilon(\mathit{CFK}^\infty(K))$ and~$\varphi(\mathit{CFK}^\infty(K))$
are abbreviated to~$\varepsilon(K)$,~$\Upsilon(K)$ and~$\varphi(K)$, respectively.
They are all computable once~$\mathit{CFK}^\infty(K)$ is known and therefore are computable for~$L$-space knots.

\subsection{The staircase complex and formal semigroups of $L$-space knots}

There is a special type of complexes in $\mathfrak{C}$, called \emph{staircase complexes},
that can be described by lengths of differential arrows (see~\cite[Section~2.4]{filtration} for example).
A staircase complex is usually encoded by an even number of positive integers~$b_1,\cdots,b_{2m}$ that are palindromic,\linebreak meaning~$b_i=b_{2m+1-i}$ for~$i=1,\cdots,2m$.
We will denote such a complex by~$\mathrm{St}(b_1,\cdots,b_{2m})$.
It corresponds to the complex~$C(b_1,-b_2,\cdots,b_{2m-1},-b_{2m})$ in~\cite{phi}.

An example is~$\mathit{CFK}^\infty(K)$ for any~$L$-space knot~$K$,
whose Alexander polynomial~$\Delta_K(t)$ must be of the form~$\sum_{i=0}^{2m}(-1)^it^{\alpha_i}$ with~$0=\alpha_0<\alpha_1<\cdots<\alpha_{2m}$~\cite[Theorem~1.2]{Lknot}\linebreak
and~$\frac{\alpha_{2m}}{2}=g(K)$ is the genus of~$K$~\cite[Theorem~1.2]{genus}.
The exponents of nonzero terms in~$\Delta_K(t)$ determine~$\mathit{CFK}^\infty(K)=\mathrm{St}(b_1,\cdots,b_{2m})$
by$$b_i=b_{2m+1-i}=\alpha_i-\alpha_{i-1},\forall i=1,\cdots,2m$$ (see~\cite[Remark~6.6]{ordered} for example).
Alternatively, one can define the \emph{formal\linebreak semigroup}~$S_K\subset\mathbb{Z}_{\geqslant0}$ of an~$L$-space knot~$K$ by the equation~$(1-t)(\sum_{s\in S}t^s)=\Delta_K(t)$.
Then \begin{equation}\label{alphaS}S_K=\{\alpha_0,\cdots,\alpha_1-1,\alpha_2,\cdots,\alpha_3-1,\cdots,\alpha_{2m-2},\cdots,\alpha_{2m-1}-1,\alpha_{2m}\}\cup\mathbb{Z}_{>\alpha_{2m}}\end{equation}
and~$b_1,\cdots,b_{2m}$ are determined by~$S_K$ in the following way:
\begin{equation}\label{bS}
\begin{split}
0,\cdots,b_1-1&\in S_K,\\
b_1,\cdots,b_1+b_2-1&\not\in S_K,\\
b_1+b_2,\cdots,b_1+b_2+b_3-1&\in S_K,\\
b_1+b_2+b_3,\cdots,b_1+b_2+b_3+b_4-1&\not\in S_K,\\
&\ \vdots\\
n&\in S_K, \forall n\geqslant b_1+\cdots+b_{2m}.
\end{split}
\end{equation}

Any torus knot~$T_{p,q}$ is an~$L$-space knot, where~$p$ and~$q$ are relatively prime positive integers.
Its formal semigroup~$S_{T_{p,q}}=\langle p,q\rangle=\{px+qy\mid x,y\in\mathbb{Z}_{\geqslant0}\}$
equals the analytically defined semigroup~\cite{Puiseux1} of the link of the singular point~$(0,0)$ on the plane\linebreak curve~$\{(z,w)\in\mathbb{C}^2\mid z^q-w^p=0\}$.

\subsection{The computation of $\varphi$ for $L$-space knots}

For $L$-space knots, we see in the above subsection that the Alexander polynomial, the knot Floer complex and the formal semigroup determine one another.
\cite[Proposition~1.5]{phi} gives the formula to compute~$\varphi$ for $L$-space knots from the Alexander polynomial.
It can also be expressed in terms of the formal semigroup.


We first introduce the following terminology for later convenience.

\begin{Def}Let $A$ be a subset of~$\mathbb{Z}$ and~$j$ be a positive integer.
A set~$B$ consisting of~$j$ consecutive integers is called a~\emph{$j$-gap of~$A$} if~$B\cap A=\emptyset$ but~$\{\min B-1,\max B+1\}\subset A$.
We use $\Phi_j(A)$ to denote the number of~$j$-gaps of~$A$ if this number is finite and use~$\Phi(A)$ to denote the sequence~$(\Phi_j(A))_{j=1}^\infty$ if each term is defined.\end{Def}

\begin{Ex}The sets $\{3,4,5,6\}$ and $\{9,10,11,12\}$ are two $4$-gaps of the\linebreak set $A=\{0,2,7,8,13\}\cup\mathbb{Z}_{>13}$.\\
We have $\Phi_1(A)=1$, $\Phi_4(A)=2$ and~$\Phi_j(A)=0,\forall j\neq1,4$.
Hence~$\Phi(A)=(1,0,0,1,0,0,0,0,\cdots)$.\end{Ex}

It is easy to verify that the function~$\Phi$ has following properties. The proof is omitted.

\begin{Lem}\label{gap}Suppose $A$ is a subset of~$\mathbb{Z}$.
\begin{enumerate}
\item If an integer $a\in A$, then~$\Phi(A)=\Phi(A\cap(-\infty,a])+\Phi(A\cap[a,\infty))$;
\item More generally, if integers~$a_1,\cdots,a_n\in A$ and~$a_1<\cdots<a_n$, then
$$\Phi(A)\linebreak=\Phi(A\cap(-\infty,a_1])+\Phi(A\cap[a_1,a_2])+\cdots+\Phi(A\cap[a_{n-1},a_n])+\Phi(A\cap[a_n,\infty));$$
\item For any $b\in\mathbb{Z}$, we have~$\Phi(\{b\})=\Phi(\mathbb{Z}_{\geqslant b})=\Phi(\mathbb{Z}_{\leqslant b})=(0,0,0,\cdots)$;
\item For any $b\in\mathbb{Z}$, we have~$\Phi(b+A)=\Phi(A)$, where~$b+A:=\{b+a\mid a\in A\}$; and
\item For any $b\in\mathbb{Z}$, we have~$\Phi(b-A)=\Phi(A)$, where~$b-A:=\{b-a\mid a\in A\}$.
\end{enumerate}\end{Lem}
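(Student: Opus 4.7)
The plan is to establish (i) by a direct bijection of $j$-gaps across the cut at $a$, derive (ii) from (i) by induction on $n$, and dispatch (iii)--(v) by immediate inspection of the definition.

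For (i), fix $a\in A$. The key observation is that no $j$-gap $B$ of $A$ can straddle $a$: since $a\in A$ and $B\cap A=\emptyset$, if $B$ contained integers on both sides of $a$ then $a$ itself would lie in $B$, a contradiction. So every $j$-gap of $A$ lies entirely in $(-\infty,a-1]$ or entirely in $[a+1,\infty)$. Next, a $j$-gap of $A$ contained in $(-\infty,a-1]$ is the same thing as a $j$-gap of $A\cap(-\infty,a]$: the disjointness and bracketing conditions are identical, since both bracketing endpoints $\min B-1$ and $\max B+1$ are at most $a$ and hence belong to $A$ iff they belong to $A\cap(-\infty,a]$. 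The analogous statement holds on the right, which yields the desired decomposition $\Phi(A)=\Phi(A\cap(-\infty,a])+\Phi(A\cap[a,\infty))$.

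Statement (ii) then follows by induction on $n$: apply (i) at $a=a_n$ and recurse on $A\cap(-\infty,a_n]$ using the chain $a_1<\cdots<a_{n-1}$. Statement (iii) is immediate from the definition: a $j$-gap requires two distinct bracketing elements of $A$ (the integers $\min B-1$ and $\max B+1$ differ by $j+1\geqslant 2$), which rules out $\{b\}$; and for $\mathbb{Z}_{\geqslant b}$ or $\mathbb{Z}_{\leqslant b}$ the absence of any element of $A$ on one side obstructs the corresponding bracketing condition.

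Items (iv) and (v) reduce to the observation that the maps $x\mapsto b+x$ and $x\mapsto b-x$ send intervals of $j$ consecutive integers to intervals of $j$ consecutive integers, commute with complementation in $\mathbb{Z}$, and (in the reflection case) merely swap $\min$ and $\max$; each therefore induces a length-preserving bijection between the $j$-gaps of $A$ and those of $b+A$ or $b-A$. The only real bookkeeping point is the verification in (i) that the bracketing condition survives restriction; once that is handled, the rest of the lemma reduces to unpacking definitions, and I do not anticipate any serious obstacle.
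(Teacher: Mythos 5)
Your proof is correct, and since the paper explicitly omits the proof of this lemma, there is nothing to compare against. The bijective gap-counting argument you give (no gap can straddle a cut point of $A$, gaps on either side of the cut are precisely the gaps of the corresponding truncated set, translation/reflection carry $j$-gaps to $j$-gaps) is the natural one, and all the small verifications check out, including the converse direction in (i) and the endpoint bookkeeping in (iv)--(v).
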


The motivation to define the function~$\Phi$ is as follows.

\begin{Lem}\label{phiPhi}For any $L$-space knot $K$ with formal semigroup~$S$, we have~$\varphi(K)=\Phi(S)$.\end{Lem}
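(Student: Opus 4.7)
The plan is to invoke the explicit formula for $\varphi(K)$ on an $L$-space knot given by \cite[Proposition~1.5]{phi} and rewrite it in terms of the formal semigroup $S$ via equation~(\ref{bS}).

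First I would recall the staircase description. Writing the Alexander polynomial as $\Delta_K(t)=\sum_{i=0}^{2m}(-1)^i t^{\alpha_i}$, we have $\mathit{CFK}^\infty(K)=\mathrm{St}(b_1,\ldots,b_{2m})$ with $b_i=\alpha_i-\alpha_{i-1}$ and the $b_i$ palindromic. The cited formula, rewritten in terms of these staircase parameters, asserts that $\varphi_j(K)$ equals the number of even indices $2i\in\{2,4,\ldots,2m\}$ with $b_{2i}=j$ (equivalently, by the palindromy $b_i=b_{2m+1-i}$, the number of odd indices with $b_{2i-1}=j$).

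Next I would translate the $b_{2i}$ directly into gap lengths of $S$. Equation~(\ref{bS}) decomposes $\mathbb{Z}_{\geqslant 0}$ into alternating blocks, in $S$ and not in $S$, of lengths $b_1,b_2,\ldots,b_{2m}$, after which every integer lies in $S$. Each of the $m$ ``not in $S$'' blocks, of length $b_{2i}$, is bounded on the left by an element of $S$ (the last element of the preceding in-block, which exists because $\alpha_0=0$ and $b_1\geqslant 1$) and on the right by an element of $S$ (the first element of the following in-block, which exists because $b_{2i+1}\geqslant 1$ for $i<m$ and because $\alpha_{2m}\in S$ for $i=m$). These $m$ blocks are therefore exactly the gaps of $S$ in the sense of the Definition preceding the lemma, and the $i$-th such block is a $b_{2i}$-gap.

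Combining the two observations, $\Phi_j(S)$ counts the number of indices $i$ with $b_{2i}=j$, which equals $\varphi_j(K)$ from the first step, so $\varphi(K)=\Phi(S)$. The only real obstacle is bookkeeping: one must carefully match the indexing conventions of \cite[Proposition~1.5]{phi} with the staircase presentation $\mathrm{St}(b_1,\ldots,b_{2m})$ used here, but since palindromy makes counting among the $b_{2i}$ and among the $b_{2i-1}$ equivalent, the identification is routine.
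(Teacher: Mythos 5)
Your proof is correct and takes essentially the same approach as the paper: both cite \cite[Proposition~1.5]{phi} to express $\varphi_j(K)$ as the number of indices $i$ with $b_{2i}=j$, and then translate this into a count of gaps of $S$. The paper invokes Equation~\eqref{alphaS} where you invoke Equation~\eqref{bS}, but these are just two presentations of the same correspondence between the staircase parameters and the formal semigroup, and your verification that the out-of-$S$ blocks are genuine gaps (bounded on both sides by elements of $S$) is a correct, if slightly more detailed, spelling-out of what the paper labels ``immediate.''
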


\begin{proof}[\emph{\bfseries Proof.}]By~\cite[Proposition~1.5]{phi},
for an $L$-space knot~$K$ with~$\Delta_K(t)=\sum_{i=0}^{2m}(-1)^it^{\alpha_i}$ and~$b_i=\alpha_i-\alpha_{i-1},\forall i=1,\cdots,2m$,
we have~$\varphi_j(K)=\#\{b_{2i}\mid i\in\{1,\cdots,m\}\text{ and }b_{2i}=j\}$ for any positive integer~$j$.
The conclusion follows from Equation~\eqref{alphaS} immediately.
\end{proof}

Now it is easy to obtain the following estimate of the~$\varphi$-invariant for torus knots, which can be viewed as a simplified version of Theorem~\ref{main}.

\begin{Prop}\label{rough}Suppose $p$ and $r$ are relatively prime positive integers with $r<p$ and $k$ is a positive integer. Then
\begin{enumerate}
\item $\varphi_j(T_{p,kp+r})=0$ for $j\geqslant p$;
\item $\varphi_j(T_{p,kp+r})=0$ for $\max\{r,p-r\}\leqslant j\leqslant p-2$ if~$2\leqslant r\leqslant p-2$; and
\item $\varphi_{p-1}(T_{p,kp+r})=k$.
\end{enumerate}\end{Prop}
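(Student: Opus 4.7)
The plan is to invoke Lemma~\ref{phiPhi} and compute $\Phi(S)$ directly for $S=\langle p,kp+r\rangle$. The structural fact I will use throughout is that $S$ decomposes over residue classes mod~$p$: for each $c\in\{0,1,\ldots,p-1\}$ there is a unique $b_c\in\{0,\ldots,p-1\}$ with $b_c q\equiv c\pmod{p}$ (where $q=kp+r$), the smallest element of $S$ with residue $c$ is $b_c q$, and $n\in S$ iff $n\geqslant b_{n\bmod p}\,q$. Set $L_c:=\lfloor b_c q/p\rfloor=b_c k+\lfloor b_c r/p\rfloor$; as a function of $b_c$ this is strictly increasing (since $k\geqslant 1$) and equals $k$ precisely when $b_c=1$.

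Part~(i) is immediate from $p\in S$: if $n-1\in S$ then $n-1+p\in S$, so a gap of length $j\geqslant p$ starting at $n$ would contain the element $n-1+p$ of $S$, a contradiction.

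For~(iii), I first observe that any $(p-1)$-gap $\{s+1,\ldots,s+p-1\}$ forces $s\equiv 0\pmod{p}$, because otherwise some $s+i$ with $1\leqslant i\leqslant p-1$ has residue $0$ and lies in $S$ automatically. Writing $s=ap$, the conditions $ap+i\notin S$ for $i=1,\ldots,p-1$ translate to $a<L_i$ for all such $i$. Because the assignment $i\mapsto b_i$ permutes $\{1,\ldots,p-1\}$ and realizes $b_r=1$ (from $b_r r\equiv r\pmod p$), the minimum of the $L_i$ is $L_r=k$. Hence the $(p-1)$-gaps correspond bijectively to $a\in\{0,1,\ldots,k-1\}$, giving $\varphi_{p-1}=k$.

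For~(ii), suppose $\{s+1,\ldots,s+j\}$ is a $j$-gap with $\max(r,p-r)\leqslant j\leqslant p-2$. The same residue-avoidance argument forces $c:=s\bmod p\in\{0,1,\ldots,p-j-1\}$, and the bound $j\geqslant p-r$ yields $c\leqslant r-1$. Write $s=ap+b_c q$. If $c=0$, the index $i=r\leqslant j$ yields the constraint $a<L_r=k$, while the right-endpoint condition $s+j+1\in S$ (applicable since $j+1\leqslant p-1$, so $b_{j+1}\geqslant 1$) gives $a\geqslant L_{j+1}\geqslant k$, a contradiction. If $c\geqslant 1$, the inequality $c\leqslant r-1<r$ guarantees $c+p-r<p$, and solving $b_{c+p-r}r\equiv c-r\pmod{p}$ with $b_c\geqslant 1$ gives $b_{c+p-r}=b_c-1$; hence $L_{c+p-r}-L_c\in\{-k,-k-1\}$, and the constraint from $i=p-r\leqslant j$ forces $a<-k<0$, impossible. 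The main obstacle is the case analysis in~(ii): the delicate point is that the hypotheses $r\leqslant j$ and $p-r\leqslant j$ are exactly what make $i=r$ and $i=p-r$ valid gap indices, producing the two contradictory constraints via the $L$-sequence.
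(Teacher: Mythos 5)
Your proof is correct, but it takes a genuinely different route from the paper's. The paper's argument is short and uses only the explicit description $S=\{0,p,2p,\ldots,kp,kp+r,(k+1)p,\ldots\}$: it splits $S$ at $kp$ via Lemma~\ref{gap}(i), observes that $S\cap(-\infty,kp]=\{0,p,\ldots,kp\}$ has exactly $k$ gaps of length $p-1$ and no others, and that $S\cap[kp,\infty)$ contains $lp$ and $lp+r$ for every $l\geqslant k$ and hence has no gap of length $\geqslant\max\{r,p-r\}$; parts (ii) and (iii) then follow by adding the two contributions. You instead develop the Ap\'ery-set data for $S$ modulo $p$ --- the bijection $c\mapsto b_c$, the floor counts $L_c=\lfloor b_c q/p\rfloor$, and the observation $b_r=1$ --- to pin down the residue of a gap's left endpoint and derive contradictory constraints on $a$. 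This is essentially the machinery the paper only introduces later in Section~5 (the $\omega_i$, $\kappa_i$ and Lemma~\ref{copies}) in order to prove Theorem~\ref{main}; in effect you are proving the warm-up proposition with the main theorem's tools. Both arguments are valid. The paper's is shorter and more elementary precisely because it exploits the very simple initial segment $\{0,p,\ldots,kp\}$ of $\langle p,kp+r\rangle$; yours is more systematic, does not rely on that special initial segment, and so would adapt more readily to other numerical semigroups, at the cost of a longer case analysis.
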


\begin{proof}[\emph{\bfseries Proof.}]The semigroup of the torus knot $T_{p,kp+r}$ is $$S=\langle p,kp+r\rangle=\{0,p,2p,\cdots,kp,kp+r,kp+p,\cdots\}.$$
Note that there cannot be~$p$ consecutive numbers in~$\mathbb{Z}_{\geqslant0}\setminus S$, since~$p$ is a generator of~$S$.
Hence~$\Phi_j(S)=0$ for~$j\geqslant p$ and (i) follows.

Because $p,kp+r\in S$ and $S$ is a semigroup, we know~$lp+r\in S$ for any~$l\geqslant k$.
Thus there are no $j$-gaps of $S\cap[kp,\infty)$ for $j\geqslant\max\{r,p-r\}$.
Clearly $S\cap(-\infty,kp]=\{0,p,2p,\cdots,kp\}$ has~$k$ $(p-1)$-gaps but no other gaps.
This shows (ii) and (iii) by Lemma~\ref{gap}~(i).
\end{proof}


\section{Some Knots in the Kernel of $\Upsilon$}

By Theorem~\ref{expansion}, $\Upsilon_{T_{p,p+r}}(t)=\Upsilon_{T_{r,p}\#T_{p,p+1}}(t)$ for any relatively prime positive integers~$p$ and~$r$.
Allen proves that~$\mathit{CFK}^\infty(T_{p,p+2})\not\sim\mathit{CFK}^\infty(T_{2,p}\#T_{p,p+1})$ for any odd integer~$p\geqslant5$~\cite[Theorem 1.2]{secondary1}
and conjectures that~$\mathit{CFK}^\infty(T_{p,p+r})\not\sim\mathit{CFK}^\infty(T_{r,p}\#T_{p,p+1})$
for any relatively prime integers $p$ and $r$ with~$p\geqslant5$ and~$2\leqslant r\leqslant p-2$~\cite[Conjecture 5.3]{secondary1}.
This conjecture is proved by Xu~\cite[Theorem 1.2]{secondary2}.
Using Proposition~\ref{rough}, we can easily provide another proof.

\begin{Prop}\label{p-2}Suppose $p$ and $r$ are relatively prime positive integers\linebreak with~$2\leqslant r\leqslant p-2$ and~$k$ is a positive integer.
Then~$\varphi_{p-2}(T_{p,kp+r})=0$\linebreak and~$\varphi_{p-2}(T_{r,p}\#kT_{p,p+1})=k$.
In particular,~$T_{p,kp+r}$ and~$T_{r,p}\#kT_{p,p+1}$ have different~$\varphi$-invariant
and therefore~$\mathit{CFK}^\infty(T_{p,kp+r})\not\sim\mathit{CFK}^\infty(T_{r,p}\#kT_{p,p+1})$.\end{Prop}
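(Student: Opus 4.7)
The plan is to deduce both equalities from Proposition~\ref{rough}, Lemma~\ref{phiPhi}, and the fact that $\varphi$ is a group homomorphism on $\mathfrak{C}/\sim$.

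The vanishing $\varphi_{p-2}(T_{p,kp+r})=0$ is essentially immediate from Proposition~\ref{rough}(ii): under the hypothesis $2\le r\le p-2$ one has $\max\{r,p-r\}\le p-2$, so $j=p-2$ falls in the asserted range.

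For the second equality, I would use additivity of $\varphi_{p-2}$ under connected sum to write
$$\varphi_{p-2}(T_{r,p}\#kT_{p,p+1})=\varphi_{p-2}(T_{r,p})+k\,\varphi_{p-2}(T_{p,p+1})$$
and handle the two summands separately. The first summand vanishes by applying Proposition~\ref{rough}(i) to $T_{r,p}$ with the roles of the two indices interchanged (so that the smaller index~$r$ plays the role of the ``$p$'' in that proposition, which is legitimate because $2\le r<p$ and $\gcd(r,p)=1$ provide the required setup); since $p-2\ge r$ one concludes $\varphi_{p-2}(T_{r,p})=0$. For the second summand I would invoke Lemma~\ref{phiPhi} and inspect the formal semigroup $\langle p,p+1\rangle$ directly: its complement in $\mathbb{Z}_{\ge0}$ is a disjoint union of gaps of lengths $p-1,p-2,\ldots,1$, each length occurring exactly once (the gap of length $p-j$ being $\{(j-1)p+j,\ldots,jp-1\}$, for $j=1,\dots,p-1$). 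Hence $\Phi_{p-2}(\langle p,p+1\rangle)=1$, giving $\varphi_{p-2}(T_{p,p+1})=1$ and making the second summand equal to $k$.

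The ``in particular'' conclusion then follows at once: since $\varphi$ is preserved by stable equivalence, two complexes with different values of $\varphi_{p-2}$ cannot be $\sim$-equivalent, so $\mathit{CFK}^\infty(T_{p,kp+r})\not\sim\mathit{CFK}^\infty(T_{r,p}\#kT_{p,p+1})$. I anticipate no serious obstacle in carrying out this plan; the only piece that is not a direct citation is the elementary gap inventory of $\langle p,p+1\rangle$, which is a short computation using the characterization that $qp+s\in\langle p,p+1\rangle$ (with $0\le s<p$) if and only if $q\ge s$.
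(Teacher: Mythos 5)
Your proposal follows essentially the same route as the paper: item (i) of Proposition~\ref{rough} (applied to $T_{r,p}$ with the smaller index playing the role of $p$) and item (ii) for $T_{p,kp+r}$, plus additivity and the value $\varphi_{p-2}(T_{p,p+1})=1$. The only cosmetic difference is that you verify $\varphi_{p-2}(T_{p,p+1})=1$ by inventorying the gaps of $\langle p,p+1\rangle$ directly, whereas the paper cites Lemma~\ref{basis}, which records the same computation; your gap inventory is correct.
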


\begin{proof}[\emph{\bfseries Proof.}]Proposition~\ref{rough}~(ii) gives $\varphi_{p-2}(T_{p,kp+r})=0$ since~$2\leqslant r\leqslant p-2$.

Proposition~\ref{rough}~(i) gives $\varphi_{p-2}(T_{r,p})=0$ since~$p-2\geqslant r$.
It is easy to verify that~$\varphi_{p-2}(T_{p,p+1})=1$ (see Lemma~\ref{basis}).
Thus~$\varphi_{p-2}(T_{r,p}\#kT_{p,p+1})=k$ since~$\varphi_{p-2}$ is a homomorphism.
\end{proof}

\begin{Cor}\label{family:phi}Let $K_i$ be the knot~$T_{p_i,k_ip_i+r_i}\#-T_{r_i,p_i}\#-k_iT_{p_i,p_i+1}$ for each positive\linebreak integer~$i$,
where~$p_i$ and~$r_i$ are relatively prime positive integers with~$2\leqslant r_i\leqslant p_i-2$\linebreak and~$p_i\leqslant p_{i+1}-2$, and~$k_i$ is any positive integer.
Then for each~$i$, we\linebreak have~$\varphi_{p_i-2}(K_i)=-k_i$ and $\varphi_{p_j-2}(K_i)=0,\forall j>i$.\end{Cor}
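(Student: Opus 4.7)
The plan is to derive this corollary directly from Proposition~\ref{p-2} and Proposition~\ref{rough}(i), using only that every $\varphi_n$ is a homomorphism on $\mathcal{C}$.

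For the first equality, I would apply Proposition~\ref{p-2} with parameters $p=p_i$, $r=r_i$, $k=k_i$; the required hypotheses $2 \leqslant r_i \leqslant p_i - 2$ and $k_i \geqslant 1$ are built into the statement. The conclusion is that $\varphi_{p_i-2}(T_{p_i,k_ip_i+r_i}) = 0$ and $\varphi_{p_i-2}(T_{r_i,p_i} \# k_i T_{p_i,p_i+1}) = k_i$. Since $K_i$ is the difference of these two knots in $\mathcal{C}$, the homomorphism property of $\varphi_{p_i-2}$ yields $\varphi_{p_i-2}(K_i) = -k_i$.

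For the vanishing claim, fix $j > i$. Iterating the hypothesis $p_i \leqslant p_{i+1}-2$ gives $p_j - 2 \geqslant p_i$. It then suffices to show that each of the three torus knot summands making up $K_i$ has vanishing $\varphi_n$ for every $n \geqslant p_i$, and sum using the homomorphism property. For $T_{p_i,k_ip_i+r_i}$ and $T_{p_i,p_i+1}$ this is exactly Proposition~\ref{rough}(i) with the smaller parameter equal to $p_i$. For $T_{r_i,p_i}$, one must first rewrite the knot so that $r_i$ plays the role of $p$ in Proposition~\ref{rough}: writing $p_i = q r_i + r'$ with $0 \leqslant r' < r_i$, the relevant vanishing threshold is $r_i$, and $r_i < p_i \leqslant p_j - 2$ makes that summand contribute zero as well.

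There is no genuine obstacle beyond these index checks; the only point that needs care is recognizing that in Proposition~\ref{rough}(i) it is the smaller index of $T_{p,q}$ which governs the vanishing of $\varphi_j$, requiring a minor re-indexing of the middle summand $T_{r_i,p_i}$.
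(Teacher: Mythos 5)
Your proof is correct and follows essentially the same route as the paper: the first claim via Proposition~\ref{p-2} and the homomorphism property of $\varphi_{p_i-2}$, and the vanishing claim via Proposition~\ref{rough}(i) applied to each torus knot summand after noting $p_j-2\geqslant p_i$. Your extra care in re-indexing $T_{r_i,p_i}$ so that $r_i$ plays the role of $p$ in Proposition~\ref{rough}(i) is exactly the implicit step the paper relies on (cf.\ the paper's own use of ``$\varphi_{p-2}(T_{r,p})=0$ since $p-2\geqslant r$'' in the proof of Proposition~\ref{p-2}).
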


\begin{proof}[\emph{\bfseries Proof.}]Obviously ~$\varphi_{p_i-2}(K_i)=\varphi_{p_i-2}(T_{p_i,k_ip_i+r_i})-\varphi_{p_i-2}(T_{r_i,p_i}\#k_iT_{p_i,p_i+1})=0-k_i$ by\linebreak Proposition~\ref{p-2}.

For any $j>i$, the hypothesis implies that~$p_j-2\geqslant p_i$.
Hence$$\varphi_{p_j-2}(K_i)=\varphi_{p_j-2}(T_{p_i,k_ip_i+r_i})-\varphi_{p_j-2}(T_{r_i,p_i})-k_i\varphi_{p_j-2}(T_{p_i,p_i+1})=0-0-k\cdot0=0$$ by Proposition~\ref{rough}~(i).
\end{proof}

\begin{proof}[\emph{\bfseries Proof of Proposition~\ref{trivial}.}]
Any linear combination of the family~$\{K_i\}_{i=1}^\infty$ in Corollary~\ref{family:phi} has vanishing $\Upsilon$-invariant by Theorem~\ref{expansion}.
It follows from the conclusion of Corollary~\ref{family:phi} and a straightforward linear algebra argument that the $\varphi$-invariant does not vanish if the linear combination is nontrivial.
Taking $k_i=1$, we obtain a direct summand of $\mathcal{C}$ isomorphic to~$\mathbb{Z}^\infty$
by applying~\cite[Lemma 6.4]{upsilon} to the family of knots~$\{K_i\}_{i=1}^\infty$ and homomorphisms~$\{-\varphi_{p_i-2}\}_{i=1}^\infty$.
\end{proof}

Clearly Proposition~\ref{trivial} recovers the main result of~\cite{example1}, since~$\varphi$ factors through~$\mathfrak{C}/\sim_\varepsilon$.
Furthermore, all families in the proof of~\cite[Theorem~1.1]{example1} can be shown to have\linebreak nonvanishing~$\varphi$-invariant by Corollary~\ref{family:phi}.
In fact, those families\linebreak are~$\{T_{p_i,k_ip_i+r_i}\#-T_{r_i,p_i}\#-k_iT_{p_i,p_i+1}\}_{i=1}^\infty$,
where~$p_i$ and~$r_i$ are relatively prime positive integers with~$4\leqslant r_i<p_i/2$ and~$p_i\leqslant r_{i+1}$,
and~$k_i$ is any positive integer.
The conditions~$4\leqslant r_i<p_i/2$ and~$p_i\leqslant r_{i+1}$ obviously imply that~$2\leqslant r_i\leqslant p_i-2$ and~$p_i\leqslant p_{i+1}-2$.
Therefore Corollary~\ref{family:phi} applies.

\section{Some Knots in the Kernel of $\varphi$}

The semigroup of the torus knot~$T_{p,p+1}$ has exactly one $j$-gap for each $j=1,\cdots,p-1$ and no other gaps.
Hence the~$\varphi$-invariant of such knots is simple, as given in~\cite[Example~1.7]{phi}.

\begin{Lem}\label{basis}For any integer~$p\geqslant2$, we have $\varphi(T_{p,p+1})=(\overbrace{1,\cdots,1}^{p-1},0,0,\cdots)$.
In particular, the~$\varphi$-invariant of any knot is a linear combination of that of knots in~$\{T_{p,p+1}\}_{p=2}^\infty$.\end{Lem}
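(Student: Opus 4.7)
The plan is to combine Lemma~\ref{phiPhi} with an explicit description of the formal semigroup $S:=S_{T_{p,p+1}}=\langle p,p+1\rangle$, and then to invert a triangular linear system for the second assertion.

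First I would analyze $S$ interval by interval. Writing a typical element of $\langle p,p+1\rangle$ as $ap+b(p+1)=(a+b)p+b$ with $a,b\geqslant 0$, a direct check shows that $S\cap[kp,(k+1)p)=\{kp,kp+1,\ldots,kp+k\}$ for $0\leqslant k\leqslant p-1$ (indeed $a+b>k$ overshoots $(k+1)p$ and $a+b<k$ falls short of $kp$ provided $k\leqslant p-1$), and that $[kp,(k+1)p)\subset S$ for every $k\geqslant p-1$ (the case $k=p-1$ is covered by the previous formula, and the remaining cases follow inductively by adding the generator $p$). Applying Lemma~\ref{gap}~(ii) with partition points $0,p,2p,\ldots,Np$ for $N$ large, together with Lemma~\ref{gap}~(iii) to kill the two tails, only the intervals $[kp,(k+1)p]$ with $0\leqslant k\leqslant p-2$ contribute to $\Phi(S)$; each contributes exactly one $(p-k-1)$-gap, namely $[kp+k+1,(k+1)p-1]$. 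As $k$ varies, these produce exactly one gap of each length $1,2,\ldots,p-1$, and Lemma~\ref{phiPhi} then translates this count into $\varphi(T_{p,p+1})=(\overbrace{1,\ldots,1}^{p-1},0,0,\ldots)$.

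For the second assertion, I would use the fact that $\varphi(K)$ is always a sequence of integers with finitely many nonzero terms, together with the triangular shape just established: the $j$-th coordinate of $\varphi(T_{p,p+1})$ equals $1$ if $j\leqslant p-1$ and $0$ otherwise. Hence for any finitely supported $\varphi(K)=(a_1,a_2,\ldots)$, setting $c_p:=a_{p-1}-a_p$ (with $a_j=0$ once $j$ exceeds the support of $\varphi(K)$) yields a finitely supported sequence of integers satisfying $\varphi(K)=\sum_{p\geqslant 2}c_p\,\varphi(T_{p,p+1})$.

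No serious obstacle is anticipated. The only mildly delicate step is the intersection formula $S\cap[kp,(k+1)p)=\{kp,\ldots,kp+k\}$; once this is established, everything else is bookkeeping with Lemma~\ref{gap} and an elementary triangular inversion.
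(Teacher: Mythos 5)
Your proposal is correct and follows essentially the same approach the paper takes: the paper simply states that $\langle p,p+1\rangle$ has exactly one $j$-gap for each $j=1,\ldots,p-1$ (citing Example 1.7 of the $\varphi$ paper) and leaves both the gap count and the triangular inversion implicit, whereas you verify the interval-by-interval structure of the semigroup and carry out the inversion $c_p=a_{p-1}-a_p$ explicitly. The details you supply check out.
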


Recall that the~$\Upsilon$-invariant~$\Upsilon_K(t)$ of any knot~$K$ is a piecewise linear function on~$[0,2]$.
Hence for any~$t\in(0,2)$, the number~$\Delta\Upsilon'_K(t):=\lim\limits_{t\rightarrow t+}\Upsilon_K'(t)-\lim\limits_{t\rightarrow t-}\Upsilon_K'(t)$ is well defined
and therefore gives a homomorphism from~$\mathcal{C}$ to~$\mathbb{R}$.
For any knot~$K$, the vanishing of the function $\Upsilon_K(t)$ clearly implies that of the numbers~$\{\Delta\Upsilon'_K(t)\}_{t\in(0,2)}$.
(The converse is also true,\linebreak since~$\Upsilon_K(0)=0$~\cite[Proposition~1.5]{upsilon}.)

\begin{proof}[\emph{\bfseries Proof of Proposition~\ref{easy}.}]
Consider the family of knots~$\{J_k\}_{k=3}^\infty$ in~\cite[Section~3.3]{semigroup}, where~$J_k=(T_{2,3})_{k,2k-1}$ is the~$(k,2k-1)$-cable of the right-handed trefoil knot.
For each positive integer~$j$,
$$\lambda_j:K\mapsto\tfrac{1}{2j-1}\Delta\Upsilon'_K(\tfrac{2}{2j-1})-\tfrac{1}{2j-1}\Delta\Upsilon'_K(\tfrac{4}{2j-1})$$
is a homomorphism from $\mathcal{C}$,
and it takes on values in~$\mathbb{Z}$ by~\cite[Corollary~8.2]{denominator}.
The proof of~\cite[Theorem~3.8]{semigroup} shows that~$\lambda_k(J_k)=1$ for any integer $k\geqslant3$ and~$\lambda_j(J_k)=0,\forall j>k$.

By Lemma~\ref{basis}, for each knot~$J_k$, there exists a connected sum~$L_k$ of knots in~$\{T_{p,p+1}\}_{p=2}^\infty$ such that~$\varphi(J_k)=\varphi(L_k)$.
Each knot $L_k$ satisfies that~$\lambda_j(J_k)=0,\forall j\in\mathbb{Z}_{>0}$ by~\cite[Proposition~3.4]{semigroup}.
Hence the family of knots~$\{J_k\#-L_k\}_{k=3}^\infty$ has vanishing~$\varphi$-invariant and satisfies that~$\lambda_k(J_k\#-L_k)=1,\forall k\geqslant3$ and~$\lambda_j(J_k\#-L_k)=0,\forall j>k$.
This family gives a direct summand of $\mathcal{C}$ isomorphic to~$\mathbb{Z}^\infty$ by~\cite[Lemma 6.4]{upsilon}.
Any nontrivial linear combination of the family has nonzero image under some~$\lambda_j$ and therefore has nonvanishing~$\Upsilon$-invariant.
\end{proof}

\section{Proof of the Recursive Formula}

We first review some classical concepts.
Let $S\subset\mathbb{Z}_{\geqslant0}$ be a numerical semigroup, which is a semigroup containing 0 such that~$\mathbb{Z}_{\geqslant0}\setminus S$ is finite.
Suppose~$p\in S$ is a positive integer.
Define the integer\begin{equation*}\label{beta}\beta_i(S,p):=\min\{s\in S\mid s\equiv i\ \mathrm{mod}\ p\}\end{equation*}for each~$i=0,\cdots,p-1$.
The set\begin{equation*}\Omega(S,p):=\{\beta_0(S,p),\cdots,\beta_{p-1}(S,p)\}=\{s\in S\mid s-p\not\in S\}\end{equation*} is called the \emph{Ap\'{e}ry set of~$S$ with respect to~$p$}~\cite{Apery}.
We order the elements in~$\Omega(S,p)$ and denote them by~$\omega_0(S,p),\cdots,\omega_{p-1}(S,p)$,
that is,\begin{equation*}\label{omega}\Omega(S,p)=\{\omega_0(S,p),\cdots,\omega_{p-1}(S,p)\}\text{ and }\omega_0(S,p)<\cdots<\omega_{p-1}(S,p).\end{equation*}

For each~$i=1,\cdots,p-1$,
we denote\begin{equation}\label{Ai}A_i(S,p):=S\cap[\lfloor\tfrac{\omega_{i-1}(S,p)}{p}\rfloor p,\lfloor\tfrac{\omega_i(S,p)}{p}\rfloor p].\end{equation}
Then~$S=A_1(S,p)\cup\cdots\cup A_{p-1}(S,p)\cup\mathbb{Z}_{\geqslant\lfloor\frac{\omega_{p-1}(S,p)}{p}\rfloor p}$
Note that~$A_i(S,p)$ could be a singleton since $\lfloor\frac{\omega_{i-1}(S,p)}{p}\rfloor$ could equal~$\lfloor\frac{\omega_i(S,p)}{p}\rfloor$.

For each~$i=0,\cdots,p-1$,
we denote\begin{equation}\label{kappa}\kappa_i(S,p):=\min\{a\in \mathbb{Z}_{\geqslant0}\mid a\equiv\omega_i(S,p)\ \mathrm{mod}\ p\}=\omega_i(S,p)-\lfloor\tfrac{\omega_i(S,p)}{p}\rfloor p.\end{equation}
Note that~$\{\kappa_0(S,p),\cdots,\kappa_{p-1}(S,p)\}=\{0,\cdots,p-1\}$ but~$\kappa_0(S,p),\cdots,\kappa_{p-1}(S,p)$ are not necessarily increasing.

Since~$0=\beta_0(S,p)\in\Omega(S,p)$, we have\begin{equation}\label{omega0}\kappa_0(S,p)=\omega_0(S,p)=0.\end{equation}

\begin{Lem}\label{copies}For each~$i=1,\cdots,p-1$,
$$\Phi(A_i(S,p))=(\lfloor\tfrac{\omega_i(S,p)}{p}\rfloor-\lfloor\tfrac{\omega_{i-1}(S,p)}{p}\rfloor)\:\Phi(\{\kappa_0(S,p),\cdots,\kappa_{i-1}(S,p),p\}).$$\end{Lem}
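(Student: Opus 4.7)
\emph{Proof plan.} Write $M_j:=\lfloor\omega_j(S,p)/p\rfloor$, so that $A_i(S,p)=S\cap[M_{i-1}p,\,M_i p]$. The plan is to slice this interval into the $M_i-M_{i-1}$ length-$p$ strips $[lp,(l+1)p]$ for $l=M_{i-1},\ldots,M_i-1$, and to show that each such strip meets $S$ in a translate of the single set $\{\kappa_0(S,p),\ldots,\kappa_{i-1}(S,p),\,p\}$. If $M_{i-1}=M_i$, then $A_i(S,p)$ is the singleton $\{M_{i-1}p\}$ and both sides of the claimed identity vanish, so I may assume $M_{i-1}<M_i$.

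The key structural claim I would establish is that, for every integer $l\in\{M_{i-1},\ldots,M_i-1\}$,
\begin{equation*}
S\cap[lp,(l+1)p]\;=\;lp+\{\kappa_0(S,p),\kappa_1(S,p),\ldots,\kappa_{i-1}(S,p),\,p\}.
\end{equation*}
Since $p\in S$ and $\omega_j(S,p)$ is the minimum element of $S$ in its residue class modulo $p$, the elements of $S$ in that class are precisely $\omega_j(S,p),\omega_j(S,p)+p,\omega_j(S,p)+2p,\ldots$. Hence $lp+\kappa_j(S,p)\in S$ if and only if $l\geq\lfloor\omega_j(S,p)/p\rfloor$. The ordering $\omega_0(S,p)<\cdots<\omega_{p-1}(S,p)$ makes $j\mapsto\lfloor\omega_j(S,p)/p\rfloor$ non-decreasing, so the condition cuts cleanly along $j=i-1$: for $j\leq i-1$ one has $\lfloor\omega_j(S,p)/p\rfloor\leq M_{i-1}\leq l$, while for $j\geq i$ one has $\lfloor\omega_j(S,p)/p\rfloor\geq M_i\geq l+1$. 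The element $p$ on the right-hand side comes from the endpoint $(l+1)p\in S$.

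Granting the structural identity, I would apply Lemma~\ref{gap}(ii) to the chain $M_{i-1}p<(M_{i-1}+1)p<\cdots<M_i p$ of elements of $A_i(S,p)$ to get $\Phi(A_i(S,p))=\sum_{l=M_{i-1}}^{M_i-1}\Phi(S\cap[lp,(l+1)p])$, then invoke translation invariance (Lemma~\ref{gap}(iv)) to rewrite each summand as $\Phi(\{\kappa_0(S,p),\ldots,\kappa_{i-1}(S,p),\,p\})$; summing the $M_i-M_{i-1}$ identical copies yields the stated formula. The main obstacle is the structural identity itself, and the cleanest route to it is precisely the monotonicity of $\lfloor\omega_j(S,p)/p\rfloor$ noted above; the remainder is routine bookkeeping with the gap lemma.
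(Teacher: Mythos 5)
Your proposal is correct and follows essentially the same route as the paper: establish the structural identity $S\cap[lp,(l+1)p]=lp+\{\kappa_0(S,p),\ldots,\kappa_{i-1}(S,p),p\}$ for $M_{i-1}\leq l<M_i$, then decompose with Lemma~\ref{gap}(ii)--(iv). Your justification of the structural identity via the clean ``$lp+\kappa_j\in S\Leftrightarrow l\geq\lfloor\omega_j/p\rfloor$'' characterization (exploiting monotonicity of $j\mapsto\lfloor\omega_j/p\rfloor$) is a slightly tidier packaging of the paper's two-containment argument, but the underlying idea is identical.
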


\begin{proof}[\emph{\bfseries Proof.}]
We will show that
\begin{equation}\label{copy}\Phi(S\cap[lp,(l+1)p]))=\Phi(\{\kappa_0(S,p),\cdots,\kappa_{i-1}(S,p),p\})
\text{ if }\lfloor\tfrac{\omega_{i-1}(S,p)}{p}\rfloor\leqslant l<\lfloor\tfrac{\omega_i(S,p)}{p}\rfloor.\end{equation}

We claim that $S\cap[lp,(l+1)p]=lp+\{\kappa_0(S,p),\cdots,\kappa_{i-1}(S,p),p\}$ if~$\lfloor\frac{\omega_{i-1}(S,p)}{p}\rfloor\leqslant l<\lfloor\frac{\omega_i(S,p)}{p}\rfloor$.

In fact, any element belonging to the left-hand side can be expressed as~$lp+a$ for\linebreak some~$a\in\{0,\cdots,p\}=\{\kappa_0(S,p),\cdots,\kappa_{p-1}(S,p),p\}$.
If~$a=\kappa_j(S,p)$ with~$j\geqslant i$,
then\linebreak $lp+a\equiv\kappa_j(S,p)\equiv\omega_j(S,p)\ \mathrm{mod}\ p$
but~$lp+a<\lfloor\frac{\omega_i(S,p)}{p}\rfloor p+a\leqslant\lfloor\frac{\omega_j(S,p)}{p}\rfloor p+\kappa_j(S,p)=\omega_j(S,p)$,
which violates the definition of~$\Omega(S,p)$.
Thus~$a\in\{\kappa_0(S,p),\cdots,\kappa_{i-1}(S,p),p\}$.

Conversely, any element belonging to the right-hand side is either~$lp+p=(l+1)p$\linebreak
or~$lp+\kappa_j(S,p)=lp+\omega_j(S,p)-\lfloor\frac{\omega_j(S,p)}{p}\rfloor p=\omega_j(S,p)+(l-\lfloor\frac{\omega_j(S,p)}{p}\rfloor)p$ for some~$j\leqslant i-1$,
which must be in~$S$.
Clearly the right-hand side of the equality in the claim is included in~$[lp,(l+1)p]$, so it is included the left-hand side.
This validates the claim.

Hence Equation~\eqref{copy} is true by Lemma~\ref{gap}~(iv). Therefore we have
\begin{align*}&\Phi(A_i(S,p))\\
=&\Phi(S\cap[\lfloor\tfrac{\omega_{i-1}(S,p)}{p}\rfloor p,\lfloor\tfrac{\omega_i(S,p)}{p}\rfloor p])\\
=&\Phi(S\cap[\lfloor\tfrac{\omega_{i-1}(S,p)}{p}\rfloor p,(\lfloor\tfrac{\omega_{i-1}(S,p)}{p}\rfloor+1)p])
+\cdots+\Phi(S\cap[(\lfloor\tfrac{\omega_i(S,p)}{p}\rfloor-1)p,\lfloor\tfrac{\omega_i(S,p)}{p}\rfloor p])\\
=&\Phi(\lfloor\tfrac{\omega_{i-1}(S,p)}{p}\rfloor p+\{\kappa_0(S,p),\cdots,\kappa_{i-1}(S,p),p\})\\
&+\cdots\\
&+\Phi((\lfloor\tfrac{\omega_i(S,p)}{p}\rfloor-1)p+\{\kappa_0(S,p),\cdots,\kappa_{i-1}(S,p),p\})\\
=&\underbrace{\Phi(\{\kappa_0(S,p),\cdots,\kappa_{i-1}(S,p),p\})+\cdots+\Phi(\{\kappa_0(S,p),\cdots,\kappa_{i-1}(S,p),p\})}
_{\lfloor\frac{\omega_i(S,p)}{p}\rfloor-\lfloor\frac{\omega_{i-1}(S,p)}{p}\rfloor}\end{align*}
by Lemma~\ref{gap}~(ii),~(iii).
\end{proof}

When $S=\langle p,q\rangle$ where~$p$ and~$q$ are relatively prime positive integers (and~$q$ is not necessarily greater that~$p$),
we have\begin{equation}\label{torus omega}\omega_i(S,p)=iq,\forall i=0,\cdots,p-1.\end{equation}
In fact, for each~$i=0,\cdots,p-1$, the integer~$iq\in\Omega(S,p)$ since~$iq-p\not\in S$.\linebreak
Otherwise,~$iq-p=px+qy$ for some~$x,y\in\mathbb{Z}_{\geqslant0}$, which implies that~$i-y=pz$ for some~$z\in\mathbb{Z}_{>0}$ and is impossible.
Hence~$\{iq\mid i=0,\cdots,p-1\}\subset\Omega(S,p)$ and the two sets must be equal as each has~$p$ elements.
Therefore the increasing integers~$\omega_0(S,p),\cdots,\omega_{p-1}(S,p)$ are~$0q,\cdots,(p-1)q$, respectively.

\begin{Ex}(1) Consider~$S=\langle5,8\rangle=\{0,5,8,10,13,15,16,18,20,21,23,24,25,26\}\cup\mathbb{Z}_{\geqslant28}$ and take~$p=5$. We have
\begin{equation*}
\left\{\begin{aligned}&\omega_0(S,p)=0\\&\omega_1(S,p)=8\\&\omega_2(S,p)=16\\&\omega_3(S,p)=24\\&\omega_4(S,p)=32\end{aligned}\right.\text{ and thus }
\left\{\begin{aligned}&\kappa_0(S,p)=0\\&\kappa_1(S,p)=3\\&\kappa_2(S,p)=1\\&\kappa_3(S,p)=4\\&\kappa_4(S,p)=2\end{aligned}\right.\text{ and }
\left\{\begin{aligned}&A_1(S,p)=\{0,5\}\\&A_2(S,p)=\{5,8,10,13,15\}\\&A_3(S,p)=\{15,16,18,20\}\\&A_4(S,p)=\{20,21,23,24,25,26,28,29,30\}\end{aligned}\right..
\end{equation*}
They satisfy Lemma~\ref{copies}:
\begin{equation*}
\left\{\begin{aligned}&\Phi(A_1(S,p))=(0,0,0,1,0,0,\cdots)=(\lfloor\tfrac{8}{5}\rfloor-\lfloor\tfrac{0}{5}\rfloor)\Phi(\{0,5\})\\
&\Phi(A_2(S,p))=(2,2,0,0,0,\cdots)=(\lfloor\tfrac{16}{5}\rfloor-\lfloor\tfrac{8}{5}\rfloor)\Phi(\{0,3,5\})\\
&\Phi(A_3(S,p))=(2,0,0,0,\cdots)=(\lfloor\tfrac{24}{5}\rfloor-\lfloor\tfrac{16}{5}\rfloor)\Phi(\{0,3,1,5\})\\
&\Phi(A_4(S,p))=(2,0,0,0,\cdots)=(\lfloor\tfrac{32}{5}\rfloor-\lfloor\tfrac{24}{5}\rfloor)\Phi(\{0,3,1,4,5\})\end{aligned}\right..
\end{equation*}

(2) Consider~$S'=\langle3,5\rangle=\{0,3,5,6\}\cup\mathbb{Z}_{\geqslant8}$ and still take~$p=5$. We have
\begin{equation*}
\left\{\begin{aligned}&\omega_0(S',p)=0\\&\omega_1(S',p)=3\\&\omega_2(S',p)=6\\&\omega_3(S',p)=9\\&\omega_4(S',p)=12\end{aligned}\right.\text{ and thus }
\left\{\begin{aligned}&\kappa_0(S',p)=0\\&\kappa_1(S',p)=3\\&\kappa_2(S',p)=1\\&\kappa_3(S',p)=4\\&\kappa_4(S',p)=2\end{aligned}\right.\text{ and }
\left\{\begin{aligned}&A_1(S',p)=\{0\}\\&A_2(S',p)=\{0,3,5\}\\&A_3(S',p)=\{5\}\\&A_4(S',p)=\{5,6,8,9,10\}\end{aligned}\right..
\end{equation*}
They satisfy Lemma~\ref{copies}:
\begin{equation*}
\left\{\begin{aligned}&\Phi(A_1(S',p))=(0,0,0,\cdots)=(\lfloor\tfrac{3}{5}\rfloor-\lfloor\tfrac{0}{5}\rfloor)\Phi(\{0,5\})\\
&\Phi(A_2(S',p))=(1,1,0,0,0,\cdots)=(\lfloor\tfrac{6}{5}\rfloor-\lfloor\tfrac{3}{5}\rfloor)\Phi(\{0,3,5\})\\
&\Phi(A_3(S',p))=(0,0,0,\cdots)=(\lfloor\tfrac{9}{5}\rfloor-\lfloor\tfrac{6}{5}\rfloor)\Phi(\{0,3,1,5\})\\
&\Phi(A_4(S',p))=(1,0,0,0,\cdots)=(\lfloor\tfrac{12}{5}\rfloor-\lfloor\tfrac{9}{5}\rfloor)\Phi(\{0,3,1,4,5\})\end{aligned}\right..
\end{equation*}
\end{Ex}

\begin{proof}[\emph{\bfseries Proof of Theorem~\ref{main}.}]
Denote$$U:=S_{T_{p,kp+r}}=\langle p,kp+r\rangle,V:=S_{T_{r,p}}=\langle p,r\rangle\text{ and }W:=S_{T_{p-r,p}}=\langle p,p-r\rangle.$$
Because~$lp\in U,\forall l\in\mathbb{Z}_{\geqslant0}$, we have $\Phi(U)=\Phi(A_1(U,p))+\cdots+\Phi(A_{p-1}(U,p))$ by Lemma~\ref{gap}~(ii),~(iii).
Similarly $\Phi(V)=\Phi(A_1(V,p))+\cdots+\Phi(A_{p-1}(V,p))$ and $\Phi(W)=\Phi(A_1(W,p))+\cdots+\Phi(A_{p-1}(W,p))$.
We will show that $$\Phi(A_i(U,p))=(k+1)\Phi(A_i(V,p))+k\Phi(A_i(W,p))$$ for each~$i=2,\cdots,p-1$ and then consider~$\Phi(A_1(U,p))$,~$\Phi(A_1(V,p))$ and~$\Phi(A_1(W,p))$ separately.

Now fix $i\in\{2,\cdots,p-1\}$. (If~$p=2$, this set is empty and there is nothing to prove.)

Since~$\Phi(A_i(S,p))=(\lfloor\frac{\omega_i(S,p)}{p}\rfloor-\lfloor\frac{\omega_{i-1}(S,p)}{p}\rfloor)\:\Phi(\{\kappa_0(S,p),\cdots,\kappa_{i-1}(S,p),p\})$ for $S=U,V,W$ by Lemma~\ref{copies},
it suffices to prove that\begin{equation}\label{UV}\Phi(\{\kappa_0(U,p),\cdots,\kappa_{i-1}(U,p),p\})=\Phi(\{\kappa_0(V,p),\cdots,\kappa_{i-1}(V,p),p\}),\end{equation}
that \begin{equation}\label{VW}\Phi(\{\kappa_0(V,p),\cdots,\kappa_{i-1}(V,p),p\})=\Phi(\{\kappa_0(W,p),\cdots,\kappa_{i-1}(W,p),p\})\end{equation}
and that\begin{equation}\label{UVW}\lfloor\tfrac{\omega_i(U,p)}{p}\rfloor-\lfloor\tfrac{\omega_{i-1}(U,p)}{p}\rfloor
=(k+1)(\lfloor\tfrac{\omega_i(V,p)}{p}\rfloor-\lfloor\tfrac{\omega_{i-1}(V,p)}{p}\rfloor)+k(\lfloor\tfrac{\omega_i(W,p)}{p}\rfloor-\lfloor\tfrac{\omega_{i-1}(W,p)}{p}\rfloor).\end{equation}

Note that $\kappa_l(U,p)=l(kp+r)-\lfloor\frac{l(kp+r)}{p}\rfloor p$ for each~$l=0,\cdots,p-1$ by Equations~\eqref{kappa} and~\eqref{torus omega}.
Similarly, $\kappa_l(V,p)=lr-\lfloor\frac{lr}{p}\rfloor p$ and $\kappa_l(W,p)=l(p-r)-\lfloor\frac{l(p-r)}{p}\rfloor p$ for each~$l=0,\cdots,p-1$.

Clearly~$\kappa_l(U,p)=\kappa_l(V,p)$ for each~$l=0,\cdots,p-1$.
In paticular, $$\{\kappa_0(U,p),\cdots,\kappa_{i-1}(U,p)\}\cup\{p\}=\{\kappa_0(V,p),\cdots,\kappa_{i-1}(V,p)\}\cup\{p\},$$ which implies Equation~\eqref{UV}.

Observe that $p$ divides $\kappa_l(V,p)\!+\!\kappa_l(W,p)\!=\!lp\!-\!(\lfloor\frac{lr}{p}\rfloor\!+\!\lfloor\frac{l(p-r)}{p}\rfloor)p$ for each~$l\!=\!0,\!\cdots\!,p\!-\!1$.
Both~$\{\kappa_0(V,p),\cdots,\kappa_{p-1}(V,p)\}$ and~$\{\kappa_0(W,p),\cdots,\kappa_{p-1}(W,p)\}$ equal~$\{0,\cdots,p-1\}$ by definition.
Moreover,~$\kappa_0(V,p)=\kappa_0(W,p)=0$ by Equation~\eqref{omega0}.
Hence~$\kappa_l(W,p)=p-\kappa_l(V,p)$ for\linebreak each~$l=1,\cdots,p-1$.
Therefore $\{\kappa_0(W,p),\cdots,\kappa_{i-1}(W,p),p\}=p-\{\kappa_0(V,p),\cdots,\kappa_{i-1}(V,p),p\}$, which implies Equation~\eqref{VW} by Lemma~\ref{gap}~(v).

To prove Equation~\eqref{UVW}, we must show
$$\lfloor\tfrac{i(kp+r)}{p}\rfloor-\lfloor\tfrac{(i-1)(kp+r)}{p}\rfloor
=(k+1)(\lfloor\tfrac{ir}{p}\rfloor-\lfloor\tfrac{(i-1)r}{p}\rfloor)+k(\lfloor\tfrac{i(p-r)}{p}\rfloor-\lfloor\tfrac{(i-1)(p-r)}{p}\rfloor)$$
because of Equation~\eqref{torus omega}.
Recall that~$i\in\{2,\cdots,p-1\}$ and that ~$p$ and~$r$ are relatively prime.
We know~$ir$ and~$i(p-r)$ are two integers not divisible by~$p$ and thus$$\lfloor\tfrac{ir}{p}\rfloor+\lfloor\tfrac{i(p-r)}{p}\rfloor=\tfrac{ir}{p}+\tfrac{i(p-r)}{p}-1=i.$$
Similarly$$\lfloor\tfrac{(i-1)r}{p}\rfloor+\lfloor\tfrac{(i-1)(p-r)}{p}\rfloor=\tfrac{(i-1)r}{p}+\tfrac{(i-1)(p-r)}{p}-1=i-1.$$
Subtracting these identities, we obtain$$(\lfloor\tfrac{ir}{p}\rfloor-\lfloor\tfrac{(i-1)r}{p}\rfloor)+(\lfloor\tfrac{i(p-r)}{p}\rfloor-\lfloor\tfrac{(i-1)(p-r)}{p}\rfloor)=1.$$
Since~$r<p$, the number~$\lfloor\frac{ir}{p}\rfloor-\lfloor\frac{(i-1)r}{p}\rfloor$ is either~$1$ or~$0$.
Hence there are only two cases:\begin{equation*}
\left\{\begin{aligned}&\lfloor\tfrac{ir}{p}\rfloor-\lfloor\tfrac{(i-1)r}{p}\rfloor=1\\&\lfloor\tfrac{i(p-r)}{p}\rfloor-\lfloor\tfrac{(i-1)(p-r)}{p}\rfloor=0\end{aligned}\right.\text{ and }
\left\{\begin{aligned}&\lfloor\tfrac{ir}{p}\rfloor-\lfloor\tfrac{(i-1)r}{p}\rfloor=0\\&\lfloor\tfrac{i(p-r)}{p}\rfloor-\lfloor\tfrac{(i-1)(p-r)}{p}\rfloor=1\end{aligned}\right..
\end{equation*}
Therefore~$\lfloor\frac{i(kp+r)}{p}\rfloor-\lfloor\frac{(i-1)(kp+r)}{p}\rfloor=k+\lfloor\frac{ir}{p}\rfloor-\lfloor\frac{(i-1)r}{p}\rfloor$ must be equal\linebreak
to~$(k+1)(\lfloor\frac{ir}{p}\rfloor-\lfloor\frac{(i-1)r}{p}\rfloor)+k(\lfloor\frac{i(p-r)}{p}\rfloor-\lfloor\frac{(i-1)(p-r)}{p}\rfloor)$ no matter which case holds.
This completes the proof of Equation~\eqref{UVW}.

With Equations~\eqref{UV},~\eqref{VW} and~\eqref{UVW},
we verified~$\Phi(A_i(U,p))=(k+1)\Phi(A_i(V,p))+k\Phi(A_i(W,p))$ for each~$i=2,\cdots,p-1$.
To see what happens for~$i=1$, note that~$A_1(U,p)=\{0,p,\cdots,kp\}$, $A_1(V,p)=\{0\}$ and~$A_1(W,p)=\{0\}$ by Equations~\eqref{Ai} and~\eqref{torus omega}.
Clearly~$\Phi(A_1(U,p))$ is the infinite sequence with the~$(p-1)$th term~$k$ and all the other terms~$0$, which\linebreak equals~$k(\varphi(T_{p,p+1})-\varphi(T_{p-1,p}))$ by Lemma~\ref{basis}.
Hence$$\Phi(A_1(U,p))=(k+1)\Phi(A_1(V,p))+k\Phi(A_1(W,p))+k(\varphi(T_{p,p+1})-\varphi(T_{p-1,p})).$$
Therefore we have~$\Phi(U)=(k+1)\Phi(V)+k\Phi(W)+k(\varphi(T_{p,p+1})-\varphi(T_{p-1,p}))$, which concludes the theorem by Lemma~\ref{phiPhi}.
\end{proof}

\section{Applications of the Recursive Formula}

First, Theorem~\ref{main} allows us to compute the~$\varphi$-invariant of a torus knot without writing down its semigroup or Alexander polynomial.

\begin{Ex}(cf. \cite[Example~2.8]{recursive}) Consider the torus knot~$T_{8,11}$. We have
\begin{align*}
&\!\varphi(T_{8,1\cdot8+3})\\
=\quad&(1+1)\varphi(T_{3,8})+1\varphi(T_{8-3,8})+1(\varphi(T_{8,9})-\varphi(T_{7,8}))\\
=\quad&2\varphi(T_{3,8})+\varphi(T_{5,8})+(\varphi(T_{8,9})-\varphi(T_{7,8}))\\
=\quad&2(3\varphi(T_{2,3})+2\varphi(T_{1,3})+2(\varphi(T_{3,4})-\varphi(T_{2,3})))\\
+&(2\varphi(T_{3,5})+\varphi(T_{2,5})+(\varphi(T_{5,6})-\varphi(T_{4,5})))\\
+&(\varphi(T_{8,9})-\varphi(T_{7,8}))\\
=\quad&2(3\varphi(T_{2,3})+2(\varphi(T_{3,4})-\varphi(T_{2,3})))\\
+&(2(2\varphi(T_{2,3})+\varphi(T_{1,3})+(\varphi(T_{3,4})-\varphi(T_{2,3}))))\\
&+(3\varphi(T_{1,2})+2\varphi(T_{1,2})+2(\varphi(T_{2,3})-\varphi(T_{1,2})))\\
&+(\varphi(T_{5,6})-\varphi(T_{4,5})))\\
+&(\varphi(T_{8,9})-\varphi(T_{7,8}))\\
=\quad&2(3(1,0,0,0,\cdots)+2(0,1,0,0,0,\cdots))\\
+&(2(2(1,0,0,0,\cdots)+(0,1,0,0,0,\cdots))+2(1,0,0,0,\cdots)+(0,0,0,1,0,0,\cdots))\\
+&(0,0,0,0,0,0,1,0,0\cdots)\\
=\quad&2(3,2,0,0,0,\cdots)\\
+&(2(2,1,0,0,0,\cdots)+(2,0,0,0,\cdots)+(0,0,0,1,0,0,\cdots))\\
+&(0,0,0,0,0,0,1,0,0\cdots)\\
=\quad&(6,4,0,0,0,\cdots)+(6,2,0,1,0,0,\cdots)+(0,0,0,0,0,0,1,0,0\cdots)\\
=\quad&(12,6,0,1,0,0,1,0,0,0,0,\cdots)
\end{align*}
\end{Ex}

Next, we consider knots with vanishing~$\varphi$-invariant arising from Theorem~\ref{main}.

\begin{Lem}\label{phi0Upsilon}Suppose $p$ and $r$ are relatively prime positive integers with~$2\leqslant r\leqslant p-2$ and~$k$ is a nonnegative integer.
Let $K$ be the knot~$T_{p,kp+r}\#-(k+1)T_{r,p}\#-kT_{p-r,p}\#-k(T_{p,p+1}\#-T_{p-1,p})$.
Then~$\Delta\Upsilon'_K(\frac{2}{p-1})=k(p-1)$ and~$\Delta\Upsilon'_K(t)=0,\forall t\in(0,\frac{2}{p-1})$.\end{Lem}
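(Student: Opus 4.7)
The plan is to reduce $\Upsilon_K$ to a combination of $\Upsilon$-functions of a few torus knots, and then analyze their breakpoints on $[0, 2/(p-1)]$.

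First, using additivity of $\Upsilon$ under connected sum together with Theorem~\ref{expansion} applied to $T_{p, kp+r}$, one obtains
\begin{align*}
\Upsilon_K(t) &= \Upsilon_{T_{p,kp+r}}(t) - (k+1)\Upsilon_{T_{r,p}}(t) - k\Upsilon_{T_{p-r,p}}(t) - k\Upsilon_{T_{p,p+1}}(t) + k\Upsilon_{T_{p-1,p}}(t) \\
&= k\bigl[\Upsilon_{T_{p-1,p}}(t) - \Upsilon_{T_{r,p}}(t) - \Upsilon_{T_{p-r,p}}(t)\bigr],
\end{align*}
after cancellation of the $\Upsilon_{T_{r,p}}$ and $\Upsilon_{T_{p,p+1}}$ terms provided by Theorem~\ref{expansion}.

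The key analytic input I plan to use is the following structural fact about $\Upsilon_{T_{n,n+1}}$: this function is piecewise linear on $[0,2]$ with breakpoints in $(0,1]$ exactly at $t = 2i/n$ for $i = 1, \ldots, n-1$, each contributing a slope jump of $n$; in particular its first breakpoint is at $t = 2/n$. This can be derived either from the explicit formula
\[\Upsilon_{T_{n,n+1}}(t) = -\sum_{i=1}^{n-1} \min\bigl(it,\,(n-i)(2-t)\bigr),\]
or from Ozsv\'ath--Stipsicz--Szab\'o's description of $\Upsilon$ for $L$-space knots via the palindromic staircase complex of $T_{n,n+1}$.

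Next, iteratively applying Theorem~\ref{expansion} (essentially the Euclidean algorithm on $(p,q)$) expresses both $\Upsilon_{T_{r,p}}$ and $\Upsilon_{T_{p-r,p}}$ as non-negative integer linear combinations of functions $\Upsilon_{T_{n,n+1}}$, in which the largest $n$ appearing is $r$ and $p-r$, respectively. Hence the first breakpoints of $\Upsilon_{T_{r,p}}$ and $\Upsilon_{T_{p-r,p}}$ are at $2/r$ and $2/(p-r)$, both at least $2/(p-2) > 2/(p-1)$, while $\Upsilon_{T_{p-1,p}}$ has its first breakpoint exactly at $t = 2/(p-1)$ with jump $p-1$. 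Therefore on $(0, 2/(p-1))$ all three pieces are linear, so $\Delta\Upsilon'_K(t) = 0$; at $t = 2/(p-1)$ only $\Upsilon_{T_{p-1,p}}$ contributes, giving $\Delta\Upsilon'_K(2/(p-1)) = k(p-1)$. The main obstacle will be establishing the precise breakpoint-and-jump structure of $\Upsilon_{T_{n,n+1}}$; although standard, a self-contained verification requires a careful computation at the level of the Alexander polynomial or the semigroup $\langle n, n+1\rangle$, after which the remainder of the argument is bookkeeping.
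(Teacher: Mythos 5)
Your algebraic reduction is identical to the paper's: both apply Theorem~\ref{expansion} to rewrite $\Upsilon_K(t)=k\bigl[\Upsilon_{T_{p-1,p}}(t)-\Upsilon_{T_{r,p}}(t)-\Upsilon_{T_{p-r,p}}(t)\bigr]$. The divergence is in how the first-singularity locations are obtained. The paper cites \cite[Theorem 8]{cobordism} directly for the first singularity of $\Upsilon_{T_{r,p}}$ (resp.~$\Upsilon_{T_{p-r,p}}$) and \cite[Proposition 6.3]{upsilon} for $\Upsilon_{T_{p-1,p}}$; you instead iterate Theorem~\ref{expansion} along the Euclidean algorithm to write $\Upsilon_{T_{r,p}}$ (resp.~$\Upsilon_{T_{p-r,p}}$) as a nonnegative integer combination of $\Upsilon_{T_{n,n+1}}$ with $n\leqslant r$ (resp.~$n\leqslant p-r$), and then invoke only the breakpoint structure of $\Upsilon_{T_{n,n+1}}$. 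Since $2\leqslant r\leqslant p-2$ gives $2/r\geqslant 2/(p-2)>2/(p-1)$ and $2/(p-r)\geqslant 2/(p-2)>2/(p-1)$, each summand except $\Upsilon_{T_{p-1,p}}$ is linear on $(0,2/(p-1)]$, so only $\Upsilon_{T_{p-1,p}}$ contributes at $t=2/(p-1)$, giving the stated jump $k(p-1)$. This is a valid and genuinely different route: it eliminates the citation to \cite{cobordism} entirely, relying only on the $\Upsilon_{T_{n,n+1}}$ computation that the paper already needs, and it makes the role of the hypothesis $2\leqslant r\leqslant p-2$ completely transparent (it is exactly what forces both lower bounds past $2/(p-1)$). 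The cost is that you must establish the breakpoint-and-jump structure of $\Upsilon_{T_{n,n+1}}$ rather than cite it, and you must note that no cancellation occurs (which is automatic here since the combination has nonnegative coefficients and each $\Upsilon_{T_{n,n+1}}$ has nonnegative slope jumps). One small slip in the exposition: you say the breakpoints of $\Upsilon_{T_{n,n+1}}$ lie in $(0,1]$ at $t=2i/n$ for $i=1,\ldots,n-1$, but for $i>n/2$ the point $2i/n$ lies in $(1,2)$; this does not affect the argument, which uses only the first breakpoint $2/n$ and its jump $n$.
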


\begin{proof}[\emph{\bfseries Proof.}]According to Theorem~\ref{expansion}, we have
\begin{align*}&\Upsilon_K(t)\\
=&\Upsilon_{T_{p,kp+r}}(t)-(k+1)\Upsilon_{T_{r,p}}(t)-k\Upsilon_{T_{p-r,p}}(t)-k(\Upsilon_{T_{p,p+1}}(t)-\Upsilon_{T_{p-1,p}}(t))\\
=&\Upsilon_{T_{r,p}}(t)+k\Upsilon_{T_{p,p+1}}(t)-(k+1)\Upsilon_{T_{r,p}}(t)-k\Upsilon_{T_{p-r,p}}(t)-k\Upsilon_{T_{p,p+1}}(t)+k\Upsilon_{T_{p-1,p}}(t)\\
=&k\Upsilon_{T_{p-1,p}}(t)-k\Upsilon_{T_{r,p}}(t)-k\Upsilon_{T_{p-r,p}}(t).\end{align*}
The location of the first singularity (the discontinuity of the derivative) of~$\Upsilon_{T_{r,p}}(t)$ is at~$\frac{2}{r-1}$ by~\cite[Theorem 8]{cobordism},
and that of~$\Upsilon_{T_{p-r,p}}(t)$ is at~$\frac{2}{p-r-1}$.
Both are greater than~$\frac{2}{p-1}$ by hypothesis.
Applying \cite[Proposition 6.3]{upsilon}, we know $\Delta\Upsilon'_{T_{p-1,p}}(\frac{2}{p-1})=p-1$\linebreak and~$\Delta\Upsilon'_{T_{p-1,p}}(t)=0,\forall t\in(0,\frac{2}{p-1})$.
This concludes the proof.
\end{proof}

Thus we immediately obtain a counterpart of Corollary~\ref{family:phi} with the roles of~$\Upsilon$ and~$\varphi$ interchanged.

\begin{Cor}\label{family:Upsilon}Let $K_i=T_{p_i,k_ip_i+r_i}\#-(k_i+1)T_{r_i,p_i}\#-k_iT_{p_i-r_i,p_i}\#-k_i(T_{p_i,p_i+1}\#-T_{p_i-1,p_i})$ for each positive integer~$i$,
where~$p_i$ and~$r_i$ are relatively prime positive integers\linebreak with~$2\leqslant r_i\leqslant p_i-2$ and~$p_i\leqslant p_{i+1}$, and~$k_i$ is any positive integer.
Then for each~$i$, we have~$\Delta\Upsilon'_{K_i}(\frac{2}{p_i-1})=k_i(p_i-1)$ and $\Delta\Upsilon'_{K_i}(\frac{2}{p_j-1})=0,\forall j>i$.\end{Cor}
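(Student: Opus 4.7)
The plan is to apply Lemma~\ref{phi0Upsilon} separately to each $K_i$ and then read off the two claims directly. The hypotheses on $(p,r,k)$ in Lemma~\ref{phi0Upsilon} match exactly those imposed on $(p_i,r_i,k_i)$ in the corollary (relatively prime positive integers with $2\leqslant r_i\leqslant p_i-2$, and $k_i$ a positive integer), and the definition of $K_i$ is literally the one appearing in the lemma with the triple $(p,r,k)$ renamed to $(p_i,r_i,k_i)$, so no further verification is needed beyond a change of name. The first conclusion of Lemma~\ref{phi0Upsilon} then immediately yields $\Delta\Upsilon'_{K_i}\bigl(\tfrac{2}{p_i-1}\bigr)=k_i(p_i-1)$, establishing the diagonal identity.

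For the off-diagonal vanishing, fix $j>i$. Iterating the monotonicity hypothesis $p_i\leqslant p_{i+1}$ gives $p_j\geqslant p_i$; combined with the tacit strict increase needed for the conclusion to have content (the relevant $\tfrac{2}{p_j-1}$'s are distinct from $\tfrac{2}{p_i-1}$), one has $p_j>p_i$ and therefore $\tfrac{2}{p_j-1}\in\bigl(0,\tfrac{2}{p_i-1}\bigr)$. The second clause of Lemma~\ref{phi0Upsilon} then gives $\Delta\Upsilon'_{K_i}\bigl(\tfrac{2}{p_j-1}\bigr)=0$ at once.

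There is no substantive obstacle: the entire content of the statement is to repackage Lemma~\ref{phi0Upsilon} into a triangular form strictly parallel to the way Corollary~\ref{family:phi} repackages Proposition~\ref{p-2}. The payoff comes downstream, where the family $\{K_i\}_{i=1}^\infty$ together with the homomorphisms $\bigl\{\Delta\Upsilon'(\tfrac{2}{p_i-1})\bigr\}_{i=1}^\infty$ can be fed into \cite[Lemma~6.4]{upsilon} to extract a direct summand of $\mathcal{C}$ isomorphic to $\mathbb{Z}^\infty$, yielding the second proof of Proposition~\ref{easy} advertised in the introduction; the vanishing of $\varphi$ on each $K_i$ is visible directly from Theorem~\ref{main}, since each summand in the definition of $K_i$ exactly matches the recursive identity.
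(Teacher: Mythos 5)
Your proof is correct and matches the paper's intent: the paper presents Corollary~\ref{family:Upsilon} as an immediate consequence of Lemma~\ref{phi0Upsilon}, which is exactly what you do. You also caught a genuine slip in the statement: as written, the hypothesis $p_i\leqslant p_{i+1}$ permits $p_j=p_i$ for some $j>i$, in which case $\Delta\Upsilon'_{K_i}\bigl(\tfrac{2}{p_j-1}\bigr)=\Delta\Upsilon'_{K_i}\bigl(\tfrac{2}{p_i-1}\bigr)=k_i(p_i-1)\neq 0$, contradicting the off-diagonal claim. The hypothesis must be the strict inequality $p_i<p_{i+1}$ (paralleling the gap condition $p_i\leqslant p_{i+1}-2$ in Corollary~\ref{family:phi}), and this strictness is indeed what the downstream application of \cite[Lemma~6.4]{upsilon} requires. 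With that correction your two-line deduction from Lemma~\ref{phi0Upsilon} is exactly right.
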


With this Corollary, we can provide many families that satisfy the conclusion of Proposition~\ref{easy}.

\begin{proof}[\emph{\bfseries Alternative proof of Proposition~\ref{easy}.}]
Take any family of knots~$\{K_i\}_{i=1}^\infty$ in Corollary~\ref{family:Upsilon} with~$k_i=1,\forall i$.
For each positive integer~$i$,
$$\xi_i:K\mapsto\tfrac{1}{p_i-1}\Delta\Upsilon'_K(\tfrac{2}{p_i-1})$$
is a homomorphism from $\mathcal{C}$,
and it takes on values in~$\mathbb{Z}$ by~\cite[Proposition 1.7]{upsilon} or~\cite[Corollary~8.2]{denominator}.
Corollary~\ref{family:Upsilon} states that~$\xi_i(K_i)\!\!=\!\!1$ for any positive integer $i$\linebreak and~$\xi_k(K_i)\!=\!0,\forall k>i$.
Additionally, the family of knots~$\{K_i\}_{i=1}^\infty$ has vanishing~$\varphi$-invariant by Theorem~\ref{main}.
This family gives a direct summand of $\mathcal{C}$ isomorphic to~$\mathbb{Z}^\infty$ by~\cite[Lemma~6.4]{upsilon}.
Any nontrivial linear combination of the family has nonzero image under some~$\xi_i$ and therefore has nonvanishing~$\Upsilon$-invariant.
\end{proof}

Finally, we discuss bounds on the concordance genus.

Recall that the \emph{concordance genus}~$g_c$ of a knot~$K$ is defined to be$$\min\{g(K')\mid K\text{ is smoothly concordant to }K'\},$$
where~$g(K')$ denotes the genus of~$K'$.

A generalization is the \emph{splitting concordance genus}~$g_{\mathrm{sp}}$~\cite[Definition 1.1]{splitting} of a knot $K$,
defined as$$\min\{\max\{g(K_1),\cdots,g(K_m)\}\mid K\text{ is smoothly concordant to }K_1\#\cdots\#K_m\text{ for some }m\}.$$
Obviously~$g_{\mathrm{sp}}(K)\leqslant g_c(K)\leqslant g(K)$ for any knot~$K$.

Let
$$T(K):=\left\{\begin{aligned}&\frac{1}{\min\{t\in\mathbb{Q}\cap(0,2)\mid\Delta\Upsilon'_K(t)\neq0\}}&&\text{if }\Upsilon_K(t)\not\equiv0\\&0&&\text{if }\Upsilon_K(t)\equiv0\end{aligned}\right.$$
and
$$N(K):=\left\{\begin{aligned}&0&&\text{if }\varphi(K)=(0,0,0,0\cdots)\\&\max\{j\mid\varphi_j(K)\neq0\}&&\text{otherwise}\end{aligned}\right.$$
for any knot~$K$.
It is not hard to see that they are invariant under concordance and subadditive under connected sum.

A topological application of the~$\varphi$-invariant provided in~\cite[Theorem~1.14(1)]{phi} is\linebreak that~$g_c(K)\geqslant\frac{1}{2}N(K)$.
Thus~$g_{\mathrm{sp}}(K)$ is also bounded below by~$\frac{1}{2}N(K)$ by the subadditivity of~$N$ under connected sum.
On the other hand,~$T(K)$ is another lower bound of~$g_{\mathrm{sp}}(K)$ by~\cite[Proposition 4.2]{splitting}.
The knot $K$ in Lemma~\ref{phi0Upsilon} demonstrates an example for which the bound~$N(K)$ does not work but~$T(K)=\frac{p-1}{2}$.

\begin{Cor}There exist knots with vanishing~$\varphi$-invariant but arbitrarily large splitting concordance genus.\end{Cor}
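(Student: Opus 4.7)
The plan is to use exactly the knots constructed in Lemma~\ref{phi0Upsilon} and show that they simultaneously have vanishing $\varphi$-invariant and unbounded splitting concordance genus as the parameter $p$ grows. The two lower bounds $T(K)$ and $\frac{1}{2}N(K)$ on $g_{\mathrm{sp}}(K)$ were already reviewed in the paragraphs preceding the corollary, so the whole argument reduces to exhibiting a sequence of knots on which $T$ blows up while $\varphi$ (hence $N$) vanishes.

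More concretely, I would fix $k=1$ and, for each sufficiently large prime $p$, choose a relatively prime $r$ with $2\leqslant r\leqslant p-2$ and set
\[K_p:=T_{p,p+r}\,\#-2T_{r,p}\,\#-T_{p-r,p}\,\#-(T_{p,p+1}\#-T_{p-1,p}).\]
The first step is to invoke Theorem~\ref{main} with $k=1$: the identity
\(\varphi(T_{p,p+r})=2\varphi(T_{r,p})+\varphi(T_{p-r,p})+(\varphi(T_{p,p+1})-\varphi(T_{p-1,p}))\)
is precisely the assertion that $\varphi(K_p)=0$. Consequently $N(K_p)=0$, so the $N$-bound on $g_{\mathrm{sp}}$ is useless but also imposes no upper limit on how large $g_{\mathrm{sp}}$ may be.

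Next I would apply Lemma~\ref{phi0Upsilon} to $K_p$: it tells us that $\Delta\Upsilon'_{K_p}(t)=0$ for $t\in(0,\tfrac{2}{p-1})$ while $\Delta\Upsilon'_{K_p}(\tfrac{2}{p-1})=p-1\neq0$. Therefore the first singularity of $\Upsilon_{K_p}(t)$ occurs at $t=\frac{2}{p-1}$, which gives $T(K_p)=\frac{p-1}{2}$. By \cite[Proposition~4.2]{splitting} we have $g_{\mathrm{sp}}(K_p)\geqslant T(K_p)=\frac{p-1}{2}$, which tends to infinity with $p$. This completes the proof once one observes that for each prime $p\geqslant5$ one can take, say, $r=2$, so that all the hypotheses ($p,r$ coprime, $2\leqslant r\leqslant p-2$, $k=1$) are satisfied.

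There is no real obstacle: every ingredient is already in place. The only thing to double-check is that the hypothesis $r<p$ of Theorem~\ref{main} and the hypothesis $2\leqslant r\leqslant p-2$ of Lemma~\ref{phi0Upsilon} are compatible with our choice, which they are for $r=2$ and $p\geqslant 5$. The bookkeeping in expanding $\varphi(K_p)$ via Theorem~\ref{main} is immediate because the recursion with $k=1$ gives the cancellation directly; no induction or auxiliary lemma is needed.
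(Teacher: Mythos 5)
Your proposal is correct and coincides with the paper's (implicit) argument: take the knot $K$ from Lemma~\ref{phi0Upsilon}, note that $\varphi(K)=0$ is exactly the content of Theorem~\ref{main}, read off $T(K)=\frac{p-1}{2}$ from the location of the first singularity of $\Upsilon_K$ given by Lemma~\ref{phi0Upsilon}, and apply the splitting-concordance-genus bound $g_{\mathrm{sp}}(K)\geqslant T(K)$ from \cite[Proposition~4.2]{splitting} while letting $p\to\infty$. The specialization $k=1$, $r=2$, $p\geqslant5$ prime is a concrete choice that satisfies all hypotheses, and the remaining bookkeeping is exactly as in the paper.
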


\section{Further Remarks}

It is an open question whether there exists a knot with vanishing~$\varepsilon$-invariant but nonvanishing~$\Upsilon$-invariant.
Since the~$\varphi$-invariant factors through~$\varepsilon$-equivalence classes,
one may naturally wonder if the knot in Lemma~\ref{phi0Upsilon} provides such an example.
However, it does not.

\begin{Prop}\label{family:epsilon}The families of knots in Corollary~\ref{family:Upsilon} give subgroups of~$\mathcal{C}$ isomorphic to~$\mathbb{Z}^\infty$
such that each of its nonzero elements has vanishing~$\varphi$-invariant but nonvanishing~$\varepsilon$-invariant.\end{Prop}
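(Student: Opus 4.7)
The statement splits into two parts. First, vanishing of~$\varphi$ on every combination~$K=\sum a_iK_i$ is immediate from Theorem~\ref{main} applied to each summand, while the homomorphisms~$\xi_i$ from the alternative proof of Proposition~\ref{easy} detect linear independence triangularly, so~\cite[Lemma~6.4]{upsilon} again supplies the~$\mathbb{Z}^\infty$ direct-summand structure.  The genuinely new content is the assertion~$\varepsilon(K)\neq0$ for every nonzero such~$K$, which strengthens the non-vanishing of~$\Upsilon_K$ used in the independence argument to non-vanishing at the level of the~$\varepsilon$-equivalence class~$[\![K]\!]\in\mathcal{CFK}$.

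Since~$\varphi$ factors through~$\sim_\varepsilon$ but already vanishes on~$K$, and~$\Upsilon$ does not factor through~$\sim_\varepsilon$ at all, a different invariant is needed to show~$[\![K]\!]\neq0$.  A short additivity computation using~$\tau(T_{p,q})=(p-1)(q-1)/2$ and the definition of~$K_i$ shows that~$\tau$ also vanishes on every~$K_i$, ruling it out too.  The plan is therefore to use a subtler invariant~$\psi\colon\mathcal{C}\rightarrow\mathbb{Z}$ that factors through~$\mathfrak{C}/\!\sim_\varepsilon$ and remembers more of the filtered knot Floer complex than~$\tau$ or~$\varphi$ does.

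The concrete steps are: first, identify such a homomorphism~$\psi_j$ for each index~$j$, naturally indexed by the parameter~$p_j$; second, compute~$\psi_i(K_i)\neq0$ and~$\psi_j(K_i)=0$ for~$j>i$ using the closed-form evaluation of~$\psi_j$ on~$L$-space knots together with connected-sum additivity, exploiting the inequality~$p_i\leqslant p_{i+1}$ of Corollary~\ref{family:Upsilon} to push the torus summands of~$K_j$ for~$j>i$ out of range of~$\psi_i$; third, apply a linear-algebra argument parallel to the~$\xi_i$-step to conclude that any nonzero combination~$K=\sum a_iK_i$ satisfies~$\psi_{i_0}(K)\neq0$ at its smallest nonvanishing index~$i_0$, so~$[\![K]\!]\neq0$ in~$\mathcal{CFK}$ and hence~$\varepsilon(K)\neq0$.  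An alternative route is to work directly with Hom's total order on~$\mathcal{CFK}$: each~$K_i$ is an integer combination of~$L$-space knots whose classes are totally ordered, and one would argue that the~``leading'' summand~$T_{p_i,k_ip_i+r_i}$ dominates in this order.

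The main obstacle is picking the correct~$\psi_j$ (or carrying out the comparison in Hom's total order): the two obvious invariants~$\Upsilon$ and~$\varphi$ are already used or vanish, and the next-simplest~$\varepsilon$-preserving invariant~$\tau$ vanishes too, so one must move to a finer invariant while preserving the triangularity in the parameters~$p_i$.  Any workable candidate must detect the~$\varepsilon$-equivalence information surviving the cancellation arranged by Theorem~\ref{main}, yet still admit a closed-form evaluation on~$L$-space knots so that triangularity carries over from the~$\Upsilon$-side computation.  Once such an invariant is in hand, the remainder of the argument mirrors the triangular linear-algebra step already used in the alternative proof of Proposition~\ref{easy}.
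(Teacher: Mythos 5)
The paper's proof works in Hom's totally ordered group $\mathcal{CFK}_\mathrm{alg}=\mathfrak{C}/\!\sim_\varepsilon$: using Lemma~\ref{last1}~\ref{split} one writes each $[\![T_{p,kp+r}]\!]$ as $k[1,p-1,p-1,1]$ plus a small remainder (Lemma~\ref{last2}), then observes that in $[\![K_i]\!]$ the leading term $k_i[1,p_i-1,p_i-1,1]$ of $[\![T_{p_i,k_ip_i+r_i}]\!]$ is exactly cancelled by the $-k_i[\![T_{p_i,p_i+1}]\!]$ summand, so that $[\![K_i]\!]=k_i[1,p_i-2,p_i-2,1]\pm O_i$ with $O_i\ll[1,p_i-2,p_i-2,1]$ (Lemma~\ref{phi0epsilon}). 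This gives $[\![K_1]\!]\ll[\![K_2]\!]\ll\cdots$, and Lemma~\ref{last1}~\ref{domination implies independence} yields linear independence of the $[\![K_i]\!]$ in $\mathcal{CFK}_\mathrm{alg}$, hence nonvanishing $\varepsilon$ for every nonzero combination.

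Your proposal correctly isolates the crux (you must show $[\![K]\!]\neq0$ and neither $\varphi$, $\tau$, nor any $\Upsilon$-derived number will do, since the first two vanish and $\Upsilon$ does not factor through $\sim_\varepsilon$), and your second suggested route — work in Hom's total order — is indeed the right one. But there is a concrete error in how you propose to run it: you claim that ``the leading summand $T_{p_i,k_ip_i+r_i}$ dominates'' the class $[\![K_i]\!]$. It does not. By Lemma~\ref{last2}~\ref{remainder}, $[\![T_{p_i,k_ip_i+r_i}]\!]=k_i[1,p_i-1,p_i-1,1]+O$ with $O$ small, and $[\![T_{p_i,p_i+1}]\!]=[1,p_i-1,p_i-1,1]$ exactly, so the $k_i[1,p_i-1,p_i-1,1]$ parts cancel against $-k_i[\![T_{p_i,p_i+1}]\!]$. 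The surviving dominant term is $k_i[1,p_i-2,p_i-2,1]$ coming from $+k_i[\![T_{p_i-1,p_i}]\!]$. If you tried to carry out the ordering comparison using $T_{p_i,k_ip_i+r_i}$ as the dominant piece, the computation would collapse to zero at leading order and you would not see the separation between $[\![K_i]\!]$ and $[\![K_{i+1}]\!]$. You also would need some version of the splitting identity of Lemma~\ref{last1}~\ref{split} and the explicit remainder estimates of Lemma~\ref{last2}, which your sketch does not supply.

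Your first route — find a homomorphism $\psi_j\colon\mathcal{C}\to\mathbb{Z}$ that factors through $\sim_\varepsilon$, is nontrivial on $K_j$, and vanishes on $K_i$ for $i<j$ — is speculative: no such homomorphism is produced, and the paper does not use one. Indeed, the conclusion of the proposition is only a $\mathbb{Z}^\infty$ subgroup, not a direct summand of $\mathcal{C}$, precisely because the independence is established through the total order (a non-Archimedean domination argument) rather than through a family of integer-valued homomorphisms.
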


To prove this statement, we adopt the same method and notations as those in~\cite{example1}.
We briefly review some for readers' convenience.

Recall that a \emph{totally ordered abelian group} is an abelian group with a total order respecting the addition operation.
For two elements $g,h\geqslant0$ of such a group, where~$0$ is the identity element, we write~$g\ll h$ if~$n\cdot g<h$ for any natural number~$n$.

The binary relation defined by~$[C]>[C']\Leftrightarrow\varepsilon(C\otimes C'^*)=1$ gives a total order\linebreak on~$\mathcal{CFK}_\mathrm{alg}:=\mathfrak{C}/\sim_\varepsilon$ (and hence on~$\mathcal{CFK}$)
that respects the addition operation~\cite[Proposition 4.1]{ordered}.

The $\varepsilon$-equivalence class of the complex~$\mathrm{St}(b_1,\cdots,b_{2m})$ is denoted by~$[b_1,\cdots,b_{2m}]$.
When some terms in the brackets are fixed, the other terms are allowed to be void.
For example, we could mean $[1,1]$ by writing~$[1,b_2,\cdots,b_{2m-1},1]$.

Now we summarize some needed facts.

\begin{Lem}\label{last1}The following statements are true.
\begin{enumerate}
\item\label{domination implies independence} \emph{(\cite[Lemma 4.7]{ordered})} If~$0<g_1\ll g_2\ll g_3\ll\cdots$ in a totally ordered abelian group,
then~$g_1,g_2,g_3,\cdots$ are linearly independent;
\item\label{a1greater} \emph{(\cite[Lemmas 6.3]{ordered})} if positive integers~$b_1$ and~$b_1'$ satisfy~$b_1>b_1'$,
then\linebreak $[b_1,b_2,\cdots,b_{2m-1},b_{2m}]\ll[b_1',b_2',\cdots,b_{2m'-1}',b_{2m'}']$ for any positive integers~$b_2,\cdots,b_{2m-1}$ and~$b_2',\cdots,b_{2m'-1}'$;
\item\label{a2greater} \emph{(\cite[Lemmas 6.4]{ordered})} if positive integers~$b_1$, $b_1'$, $b_2$ and $b_2'$ satisfy~$b_1=b_1'$\linebreak and~$b_2>b_2'$,
then~$[b_1,b_2,b_3,\cdots,b_{2m-2},b_{2m-1},b_{2m}]\gg[b_1',b_2',b_3',\cdots,b_{2m'-2}',b_{2m'-1}',b_{2m'}']$ for any positive integers~$b_3,\cdots,b_{2m-2}$ and~$b_3',\cdots,b_{2m'-2}'$; and
\item\label{split} \emph{(\cite[Lemma 3.1]{filtration} and~\cite[example~2.2]{example1})} If the positive integer $b_j\leqslant n$ for each~$j=1,\cdots,m$,
then $$[\underbrace{1,n,\cdots,1,n}_{k\text{ copies of }1,n},b_1,\cdots,b_m,b_m,\cdots,b_1,\underbrace{n,1,\cdots,n,1}_{k\text{ copies of }n,1}]=k[1,n,n,1]+[b_1,\cdots,b_m,b_m,\cdots,b_1].$$
\end{enumerate}\end{Lem}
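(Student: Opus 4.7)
The vanishing of the $\varphi$-invariant on every $\mathbb{Z}$-linear combination of $\{K_i\}$ is already contained in the proof of Corollary~\ref{family:Upsilon}: Theorem~\ref{main} ensures $\varphi(K_i)=0$ for each $i$. Thus the remaining content of Proposition~\ref{family:epsilon} is the nonvanishing of $\varepsilon$ together with $\mathbb{Z}^\infty$ independence. Since the total order on $\mathcal{CFK}$ makes $\varepsilon$-nonvanishing the same as being nonzero in $\mathcal{CFK}$, it will suffice (following~\cite{example1}) to produce representatives $g_i = \pm[\![K_i]\!]$ with $0 < g_1 \ll g_2 \ll \cdots$ and invoke Lemma~\ref{last1}(i).

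My first step is to write each of the four torus-knot summands of
\[
K_i = T_{p_i,k_ip_i+r_i} \,\#\, -(k_i+1)T_{r_i,p_i} \,\#\, -k_i T_{p_i-r_i,p_i} \,\#\, -k_i\bigl(T_{p_i,p_i+1} \,\#\, -T_{p_i-1,p_i}\bigr)
\]
as an explicit staircase, read off from its semigroup via Equation~\eqref{alphaS}. This realizes $[\![K_i]\!] \in \mathcal{CFK}_{\mathrm{alg}}$ as an integer combination of staircase classes, whose outer entries are of the shape $(1,p_i-1)$, $(1,r_i-1)$, $(1,p_i-r_i-1)$, and $(1,p_i-2)$ respectively. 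The second step is to reduce this combination to a single staircase class $g_i = [b_1^{(i)},\ldots,b_{2m_i}^{(i)}]$ (up to a fixed sign). Applying the splitting identity of Lemma~\ref{last1}(iv) iteratively, with the value of $n$ chosen to match the outer entries of each staircase, lets one peel off standardized pieces and combine the surviving residues from the various torus-knot summands into a single palindromic staircase whose shape is governed by $p_i$, $r_i$, and $k_i$.

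With $g_i$ in staircase form, the inequality $g_i\ll g_{i+1}$ follows from the hypothesis $p_i\leq p_{i+1}$: this forces the first position at which the staircase for $g_{i+1}$ differs from that for $g_i$ to be one at which $g_{i+1}$ has a strictly larger entry (or, with the opposite sign convention, a strictly smaller leading entry). Lemma~\ref{last1}(ii) handles the case that the comparison is decided by the leading entry, and Lemma~\ref{last1}(iii) takes care of the case that the leading entries coincide and the comparison passes to the second entry. Positivity $g_1>0$ is immediate from a concrete staircase representative of $[\![K_1]\!]$. Lemma~\ref{last1}(i) then delivers both the $\mathbb{Z}^\infty$ independence of $\{[\![K_i]\!]\}$ in $\mathcal{CFK}$ and the nonvanishing of $\varepsilon$ on every nontrivial combination.

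The principal obstacle is the explicit reduction in the second step. The connected sum $K_i$ corresponds to a tensor product of staircase complexes together with their duals, which is not itself a staircase; extracting a staircase representative of its $\varepsilon$-class requires iterated and careful use of Lemma~\ref{last1}(iv), verifying the hypothesis $b_j\leq n$ at each application and tracking how the signs and multiplicities present in $K_i$ combine. This is the same style of computation as in~\cite{example1}, whose method the author has explicitly said he is adopting, so the difficulty lies in the bookkeeping rather than in any new conceptual ingredient.
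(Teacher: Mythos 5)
Your proposal does not address the statement it was supposed to prove. The statement under review is Lemma~\ref{last1}, whose content is the four facts (i)--(iv) about the totally ordered group $\mathcal{CFK}_\mathrm{alg}$: linear independence of a $\ll$-increasing sequence, the two comparison criteria for staircase classes $[b_1,b_2,\cdots]$ in terms of their first and second entries, and the splitting identity for staircases of the form $[1,n,\cdots,1,n,b_1,\cdots,b_1,n,1,\cdots,n,1]$. In the paper these are not proved at all; they are quoted from \cite[Lemmas 4.7, 6.3, 6.4]{ordered}, \cite[Lemma 3.1]{filtration} and \cite[Example 2.2]{example1}. Your text never argues any of these four assertions --- on the contrary, it invokes all of them (parts (i), (ii), (iii) and (iv) each appear as tools) in order to sketch a proof of Proposition~\ref{family:epsilon}. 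So as a proof of Lemma~\ref{last1} it is circular in the most direct sense: the statements to be established are taken as premises, and what is actually argued is a different result of the paper.

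Even judged as a sketch of Proposition~\ref{family:epsilon}, the central step is not viable as written. You propose to reduce $[\![K_i]\!]$, a $\mathbb{Z}$-linear combination of staircase classes and their inverses, to a \emph{single} staircase class by iterated use of part (iv). Part (iv) only rewrites one staircase of the special shape $[1,n,\cdots,1,n,b_1,\cdots,b_m,b_m,\cdots,b_1,n,1,\cdots,n,1]$ as $k[1,n,n,1]$ plus a smaller staircase; it gives no way to combine classes coming from different torus-knot summands, and a tensor product of staircases and duals is in general not $\varepsilon$-equivalent to any staircase. The paper avoids exactly this: via Lemma~\ref{last2} and Lemma~\ref{phi0epsilon} it writes $[\![K_i]\!]=k_i[1,p_i-2,p_i-2,1]\pm O_i$ with $0\leqslant O_i\ll[1,p_i-2,p_i-2,1]$, i.e.\ it only controls the class up to a $\ll$-small error and then deduces $[\![K_i]\!]\ll[\![K_{i+1}]\!]$ directly from the definition of $\ll$, never producing a staircase representative. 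If you want to salvage your outline you would need to replace the ``reduce to a single staircase'' step by this kind of dominant-term-plus-error bookkeeping; but none of that bears on Lemma~\ref{last1} itself, which in this paper is simply a citation.
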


The following bounds are enough for later use.

\begin{Lem}\label{last2}Suppose $p$ and $r$ are relatively prime positive integers with~$r<p$ and~$k$ is a positive integer.
\begin{enumerate}
\item\label{self bound} $[\![T_{p,kp+r}]\!]\ll[1,p,p,1]$ in $\mathcal{CFK}_\mathrm{alg}$.
\item\label{remainder} If $p\geqslant3$, then $[\![T_{p,kp+r}]\!]=k[1,p-1,p-1,1]+O$, where $0\leqslant O\ll[1,r,r,1]$ in $\mathcal{CFK}_\mathrm{alg}$.
\end{enumerate}\end{Lem}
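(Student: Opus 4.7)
The plan is to compute the staircase sequence $(b_1, \ldots, b_{2m})$ of $T_{p, kp+r}$ directly from its formal semigroup $S = \langle p, kp + r\rangle$ via Equation~\eqref{bS}, and then apply the comparison and splitting tools assembled in Lemma~\ref{last1}. For part~(i), since $0, p \in S$ while $1, \ldots, p-1 \notin S$ (none can be written as $ap + b(kp+r)$ with $a, b \geq 0$), Equation~\eqref{bS} gives $b_1 = 1$ and $b_2 = p - 1$. Comparing against $[1, p, p, 1]$, whose second entry is $p > p - 1$, Lemma~\ref{last1}~(iii) immediately yields $[\![T_{p, kp+r}]\!] \ll [1, p, p, 1]$.

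For part~(ii), the strategy is to realize the staircase in the format of Lemma~\ref{last1}~(iv) with $n = p - 1$. Because $S \cap [0, kp] = \{0, p, 2p, \ldots, kp\}$ (the generator $kp+r$ contributes nothing in this range), the first $2k$ entries are $1, p-1, 1, p-1, \ldots, 1, p-1$, and by palindromic symmetry the last $2k$ entries are $p-1, 1, \ldots, p-1, 1$. I would then verify that every entry $b_i$ is at most $p - 1$: gap entries are bounded by $p - 1$ because $p \in S$; run entries are bounded by $p - 1$ because $p$ consecutive elements of $S$ would force $\mathbb{Z}_{\geq s}\subset S$ for the run's starting point $s$, pushing $s$ past the Frobenius number $pq - p - q$ and out of the finite portion of the staircase. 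Invoking Lemma~\ref{last1}~(iv) then gives $[\![T_{p, kp+r}]\!] = k[1, p-1, p-1, 1] + O$, where $O$ is the $\varepsilon$-equivalence class of the middle palindrome.

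To bound $O \ll [1, r, r, 1]$, I would split on $r$. If $r \geq 2$, a direct solvability check shows $kp + 1 \notin S$, so the middle starts with a singleton run at $kp$ (entry $1$) followed by the gap $kp+1, \ldots, kp+r-1$ (entry $r-1 < r$); Lemma~\ref{last1}~(iii) applies. If $r = 1$ with $p \geq 5$, then $kp + 1 = q \in S$ but $kp + 2 \notin S$, so the middle starts with an entry of value $2 > 1$ and Lemma~\ref{last1}~(ii) applies. If $r = 1$ with $p = 3$, then the genus count $(p-1)(kp+r-1)/2 = kp$ exactly equals the span of the prefix plus the suffix, so the middle has no entries, $O = 0$, and the inequality is automatic.

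The main technical step I expect is the uniform bound $b_i \leq p - 1$ on all staircase entries: the gap bound is immediate, but the run bound depends on the Frobenius-number argument sketched above. The subsequent casework identifying the first one or two middle entries is routine once one knows precisely which small translates of $kp$ are or are not expressible using the two generators.
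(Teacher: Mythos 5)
Your argument follows the same route as the paper's: peel off the initial/terminal $k$ copies of $(1,p-1)$ from the staircase of $T_{p,kp+r}$ using $S\cap[0,kp]=\{0,p,\ldots,kp\}$, bound all middle entries by $p-1$, invoke Lemma~\ref{last1}~(iv), and then compare the leftover block against $[1,r,r,1]$ via its first one or two entries. One improvement: your direct proof of part~(i)---compare $b_2=p-1$ against the second entry $p$ of $[1,p,p,1]$ and apply Lemma~\ref{last1}~(iii)---handles every $p\geqslant2$ uniformly, whereas the paper derives (i) from (ii) for $p\geqslant3$ and checks $p=2$ separately. One slip: in the $r=1$ subcase of (ii) you split into $p\geqslant5$ and $p=3$, omitting $p=4$; since $\gcd(4,1)=1$ is admissible and your $p\geqslant5$ argument ($kp+1=q\in S$ but $kp+2\notin S$, so $c_1=2$) works verbatim at $p=4$, the condition should read $p\geqslant4$. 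Finally, your explicit Frobenius--conductor justification that every run length is at most $p-1$ (since a run of $p$ consecutive semigroup elements would force $\mathbb{Z}_{\geqslant s}\subset S$ and push $s$ to or past the conductor $2g$) correctly fills in a bound the paper states without elaboration.
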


\begin{proof}[\emph{\bfseries Proof.}]Assume $p\geqslant3$.
In the proof of Lemma~\ref{rough}~(i),
we have seen that the semigroup of~$T_{p,kp+r}$ has no~$j$-gaps for~$j\geqslant p$.
According to the relationship between the staircase complex and gaps shown in Equation~\eqref{bS} (and the palindromicity),
$$\mathit{CFK}^\infty(T_{p,kp+r})=\mathrm{St}(\underbrace{1,p-1,\cdots,1,p-1}_{k\text{ copies of }1,p-1},c_1,\cdots,c_{2l},\underbrace{p-1,1,\cdots,p-1,1}_{k\text{ copies of }p-1,1}),$$
where $c_1,\cdots,c_{2l}$ are positive integers less than~$p$.
Then$$[\![T_{p,kp+r}]\!]=k[1,p-1,p-1,1]+[c_1,\cdots,c_{2l}]$$ by Lemma~\ref{last1}~\ref{split}.
From the initial terms~$0,p,2p,\cdots,kp,kp+r,kp+p$ in~$\langle p,kp+r\rangle$, we know that either $c_1=2$ (when~$r=1$) or $(c_1,c_2)=(1,r-1)$ (when~$r>1$).
These two cases both imply~$[c_1,\cdots,c_{2l}]\ll[1,r,r,1]$ by Lemma~\ref{last1}~\ref{a1greater}~or~\ref{a2greater}.
Taking~$O=[c_1,\cdots,c_{2l}]$ (or~$O=0$ when~$l=0$), the second part of the lemma is proved.

To show the first part in the case of~$p\geqslant3$, note that~$[1,p-1,p-1,1]\ll[1,p,p,1]$ and that~$O\ll[1,r,r,1]\ll[1,p,p,1]$ by Lemma~\ref{last1}~\ref{a2greater}.
Then the conclusion follows from the definition of~$\ll$.
If~$p=2$, we can directly verify that~$[\![T_{p,kp+r}]\!]=[\underbrace{1,\cdots,1}_{2k}]\ll[1,2,2,1]$.
\end{proof}

\begin{Lem}\label{phi0epsilon}Suppose $p$ and $r$ are relatively prime positive integers with~$2\leqslant r\leqslant p-2$ and~$k$ is a positive integer.
Then $$[\![T_{p,kp+r}\#-(k+1)T_{r,p}\#-kT_{p-r,p}\#-k(T_{p,p+1}\#-T_{p-1,p})]\!]=k[1,p-2,p-2,1]\pm O,$$ where~$0\leqslant O\ll[1,p-2,p-2,1]$ in~$\mathcal{CFK}_\mathrm{alg}$.\end{Lem}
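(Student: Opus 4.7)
The plan is to compute $[\![K]\!]$ summand by summand via Lemma~\ref{last2}, using the refined decomposition of part~(ii) for the three summands whose leading pieces figure in the main term, and absorbing the remaining two summands into the error using the coarser self-bound of part~(i). Since $2 \leqslant r \leqslant p-2$ forces $p \geqslant 4$, all applications of Lemma~\ref{last2}(ii) below are legitimate. Writing $p+1 = 1 \cdot p + 1$ and $p = 1\cdot(p-1)+1$, I would record
\begin{align*}
[\![T_{p,kp+r}]\!] &= k[1,p-1,p-1,1] + O_1, & 0 \leqslant O_1 &\ll [1,r,r,1],\\
[\![T_{p,p+1}]\!] &= [1,p-1,p-1,1] + O_4, & 0 \leqslant O_4 &\ll [1,1,1,1],\\
[\![T_{p-1,p}]\!] &= [1,p-2,p-2,1] + O_5, & 0 \leqslant O_5 &\ll [1,1,1,1],
\end{align*}
together with the self-bounds $[\![T_{r,p}]\!] \ll [1,r,r,1]$ and $[\![T_{p-r,p}]\!] \ll [1,p-r,p-r,1]$ from Lemma~\ref{last2}(i).

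Plugging into $[\![K]\!] = [\![T_{p,kp+r}]\!] - (k+1)[\![T_{r,p}]\!] - k[\![T_{p-r,p}]\!] - k[\![T_{p,p+1}]\!] + k[\![T_{p-1,p}]\!]$, the contributions $k[1,p-1,p-1,1]$ from $T_{p,kp+r}$ and $-k[1,p-1,p-1,1]$ from $-kT_{p,p+1}$ cancel, leaving
\[
[\![K]\!] = k[1,p-2,p-2,1] + E, \qquad E := O_1 - (k+1)[\![T_{r,p}]\!] - k[\![T_{p-r,p}]\!] - kO_4 + kO_5.
\]
It then remains to show $|E| \ll [1,p-2,p-2,1]$, in which case setting $O := |E|$ yields the advertised expression.

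Each summand of $E$ is $\ll$ one of $[1,r,r,1]$, $[1,p-r,p-r,1]$, or $[1,1,1,1]$; by Lemma~\ref{last1}(iii) and the inequalities $r, p-r \leqslant p-2$ and $1 \leqslant p-2$, each of these three is $\leqslant [1,p-2,p-2,1]$, so each summand of $E$ is $\ll [1,p-2,p-2,1]$. Because $\ll$ is preserved under finite signed sums and under multiplication by the fixed constants $k$ and $k+1$, the bound $|E| \ll [1,p-2,p-2,1]$ follows. The main obstacle is really just bookkeeping: identifying which three of the five summands need the sharp decomposition of Lemma~\ref{last2}(ii) (precisely those producing $[1,p-1,\cdot]$- or $[1,p-2,\cdot]$-type leading terms) versus the two that can be swept into the error via the self-bound, and verifying that the cancellation of $\pm k[1,p-1,p-1,1]$ leaves precisely the claimed $k[1,p-2,p-2,1]$ and nothing else of that size.
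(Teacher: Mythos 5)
Your proof is correct and follows essentially the same route as the paper's: expand each of the five torus-knot summands via Lemma~\ref{last2}, using part~(ii) for $T_{p,kp+r}$, $T_{p,p+1}$, $T_{p-1,p}$ and part~(i) for $T_{r,p}$, $T_{p-r,p}$, observe the cancellation of the $\pm k[1,p-1,p-1,1]$ terms, and absorb the five error pieces into a single $O$ dominated by $[1,p-2,p-2,1]$. (One minor remark: the paper states the bounds for $[\![T_{r,p}]\!]$ and $[\![T_{p-r,p}]\!]$ as $\ll[1,r-1,r-1,1]$ and $\ll[1,p-r-1,p-r-1,1]$, whereas Lemma~\ref{last2}(i) literally yields $\ll[1,r,r,1]$ and $\ll[1,p-r,p-r,1]$ as you wrote; your weaker bounds still close the argument since $r,p-r\leqslant p-2$ gives $[1,r,r,1],[1,p-r,p-r,1]\leqslant[1,p-2,p-2,1]$, so your version is, if anything, the more carefully justified one.)
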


\begin{proof}[\emph{\bfseries Proof.}]Observe that
\begin{align*}&[\![T_{p,kp+r}\#-(k+1)T_{r,p}\#-kT_{p-r,p}\#-k(T_{p,p+1}\#-T_{p-1,p})]\!]\\
=&[\![T_{p,kp+r}]\!]-(k+1)[\![T_{r,p}]\!]-k[\![T_{p-r,p}]\!]-k[\![T_{p,p+1}]\!]+k[\![T_{p-1,p}]\!]\\
=&k([1,p-1,p-1,1]+O_1)-(k+1)O_2-kO_3\\
&-k([1,p-1,p-1,1]+O_4)+k([1,p-2,p-2,1]+O_5)\\
=&k[1,p-2,p-2,1]+kO_1-(k+1)O_2-kO_3-kO_4+kO_5.\end{align*}

Here $0\leqslant O_2\ll[1,r-1,r-1,1]$ and~$0\leqslant O_3\ll[1,p-r-1,p-r-1,1]$ by Lemma~\ref{last2}~\ref{self bound},
while~$0\leqslant O_1\ll[1,r,r,1]$, $0\leqslant O_4\ll[1,1,1,1]$ and $0\leqslant O_5\ll[1,1,1,1]$ by Lemma~\ref{last2}~\ref{remainder}.

Take $O=\pm(kO_1-(k+1)O_2-kO_3-kO_4+kO_5)$, whichever is nonnegative. The conclusion follows from the definition of $\ll$.
\end{proof}

\begin{proof}[\emph{\bfseries Proof of Proposition~\ref{family:epsilon}.}]
From Lemma~\ref{phi0epsilon}, we know~$[\![K_i]\!]=k_i[1,p_i-2,p_i-2,1]\pm O_i$ with~$0\leqslant O_i\ll[1,p_i-2,p_i-2,1]$.
This implies~$[\![K_i]\!]\ll[\![K_{i+1}]\!]$ for each~$i$, that\linebreak is,~$n[\![K_i]\!]<[\![K_{i+1}]\!]$ for any natural number~$n$.
In fact,
\begin{align*}&[\![K_{i+1}]\!]-n[\![K_i]\!]\\
=&(k_{i+1}[1,p_{i+1}-2,p_{i+1}-2,1]\pm O_{i+1})-n(k_i[1,p_i-2,p_i-2,1]\pm O_i)\\
=&k_{i+1}[1,p_{i+1}-2,p_{i+1}-2,1]-(\mp O_{i+1}+nk_i[1,p_i-2,p_i-2,1]\pm nO_i)\\
>&k_{i+1}[1,p_{i+1}-2,p_{i+1}-2,1]-(\mp O_{i+1}+nk_i[1,p_i-2,p_i-2,1]\pm n[1,p_i-2,p_i-2,1])\\
>&0.\end{align*}

Now~$\{[\![K_i]\!]\}_{i=1}^\infty$ generates a subgroup isomorphic to~$\mathbb{Z}^\infty$ in~$\mathcal{CFK}_\mathrm{alg}$ by Lemma~\ref{last1}~\ref{domination implies independence}.\linebreak
Therefore~$\{K_i\}_{i=1}^\infty$ generates a subgroup isomorphic to~$\mathbb{Z}^\infty$ in~$\mathcal{C}$.
Any nontrivial linear\linebreak combination of this family has nonvanishing~$\varepsilon$-invariant, because it maps to a nontrivial linear combination of~$\{[\![K_i]\!]\}_{i=1}^\infty$
under the homomorphism from~$\mathcal{C}$ to~$\mathcal{CFK}_\mathrm{alg}$ that maps any knot to its~$\varepsilon$-equivalence class.
\end{proof}

A knot $K$ produced by Proposition~\ref{p-2} or Lemma~\ref{phi0Upsilon} satisfies that~$\varepsilon(K)\neq0$ and that exact one of~$\Upsilon(K)$ and~$\varphi(K)$ vanishes.
In~\cite{example2}, we will give knots with nonvanishing~$\varepsilon$-invariant such that both~$\Upsilon$ and~$\varphi$ vanish.


\begin{thebibliography}{DHST19}
\bibitem[All20]{secondary1}Samantha Allen, \emph{Using secondary Upsilon invariants to rule out stable equivalence of knot complexes}, Algebraic \& Geometric Topology 20 (2020) 29--48.
\bibitem[Ape46]{Apery}Roger Ap\'{e}ry, \emph{Sur les branches superlin\'{e}aires des courbes alg\'{e}briques}, Comptes Rendus Hebdomadaires des S\'{e}ances de l'Acad\'{e}mie des Sciences 222 (1946) 1198--1200.
\bibitem[DHST19]{phi}Irving Dai, Jennifer Hom, Matthew Stoffregen and Linh Truong, \emph{More concordance homomorphisms from knot Floer homology}, arXiv:1902.03333.
\bibitem[FK17]{recursive}Peter Feller and David Krcatovich, \emph{On cobordisms between knots, braid index, and the Upsilon-invariant}, Mathematische Annalen 369 (2017) 301--329.
\bibitem[HHN13]{filtration}Stephen Hancock, Jennifer Hom and Michael Newman, \emph{On the knot Floer filtration of the concordance group}, Journal of Knot Theory and Its Ramifications, Vol. 22, No. 14 (2013) 1350084 (30 pages).
\bibitem[Hom14a]{epsilon}Jennifer Hom, \emph{Bordered Heegaard Floer homology and the tau-invariant of cable knots}, Journal of Topology 7 (2014) 287--326.
\bibitem[Hom14b]{ordered}Jennifer Hom, \emph{The knot Floer complex and the smooth concordance group}, Commentarii Mathematici Helvetici 89 (2014), 537--570.
\bibitem[Hom15]{1nn1}Jennifer Hom, \emph{An infinite-rank summand of topologically slice knots}, Geometry \& Topology 19 (2015) 1063--1110.
\bibitem[Hom16]{example0}Jennifer Hom, \emph{A note on the concordance invariants epsilon and upsilon}, Proceedings of the American Mathematical Society, Volume 144, Number 2, February 2016, Pages 897--902.
\bibitem[Hom17]{survey}Jennifer Hom, \emph{A survey on Heegaard Floer homology and concordance}, Journal of Knot Theory and Its Ramifications, Vol. 26, No. 2 (2017) 1740015 (24 pages).
\bibitem[KL18]{secondary0}Se-Goo Kim and Charles Livingston, \emph{Secondary Upsilon invariants of knots}, The Quarterly Journal of Mathematics 69 (2018), 799--813.
\bibitem[Liv17]{denominator}Charles Livingston, \emph{Notes on the knot concordance invariant Upsilon}, Algebraic \& Geometric Topology 17 (2017) 111--130.
\bibitem[OSS17]{upsilon}Peter Ozsv\'{a}th, Andr\'{a}s Stipsicz and Zolt\'{a}n Szab\'{o}, \emph{Concordance homomorphisms from knot Floer homology}, Advances in Mathematics 315 (2017) 366--426.
\bibitem[OS04a]{CFKdef1}Peter Ozsv\'{a}th and Zolt\'{a}n Szab\'{o}, \emph{Holomorphic disks and knot invariants}, Advances in Mathematics 186 (2004) 58--116.
\bibitem[OS04b]{genus}Peter Ozsv\'{a}th and Zolt\'{a}n Szab\'{o}, \emph{Holomorphic disks and genus bounds}, Geometry \& Topology 8 (2004) 311--334.
\bibitem[OS05]{Lknot}Peter Ozsv\'{a}th and Zolt\'{a}n Szab\'{o}, \emph{On knot Floer homology and lens space surgeries}, Topology 44 (2005) 1281--1300.
\bibitem[Ras03]{CFKdef2}Jacob Rasmussen, \emph{Floer homology and knot complements}, PhD Thesis, Harvard University, 2003.
\bibitem[Wal04]{Puiseux1}C. T. C. Wall, \emph{Singular points of plane curves}, London Mathematical Society Student Texts 63, Cambridge University Press, Cambridge, 2004.
\bibitem[Wan16a]{cobordism}Shida Wang, \emph{On the first singularity for the Upsilon invariant of algebraic knots}, Bulletin of the London Mathematical Society 48 (2016) 349--354.
\bibitem[Wan16b]{splitting}Shida Wang, \emph{The genus filtration in the smooth concordance group}, Pacific Journal of Mathematics, Vol. 285 (2016), No. 2, 501--510.
\bibitem[Wan18]{semigroup}Shida Wang, \emph{Semigroups of $L$-space knots and nonalgebraic iterated torus knots}, Mathematical Research Letters, Volume 25 (2018), Number 1, 335--346.
\bibitem[Wan20a]{example1}Shida Wang, \emph{A further note on the concordance invariants epsilon and Upsilon}, Proceedings of the American Mathematical Society, Volume 148, Number 2, February 2020, Pages 893--899.
\bibitem[Wan20b]{example2}Shida Wang, \emph{A note on the concordance invariant epsilon}, preprint.
\bibitem[Xu18]{secondary2}Xiaoyu Xu, \emph{On the secondary Upsilon invariant}, arXiv:1805.09376.
\end{thebibliography}
\end{document}